\def\MR#1{}
\newcommand{\bx}{\boldsymbol{x}}
\newcommand{\bmu}{\boldsymbol{\mu}}
\newcommand{\balpha}{\boldsymbol{\alpha}}
\newcommand{\blambda}{\boldsymbol{\lambda}}
\newcommand{\Q}{\mathbb{Q}}
\newcommand{\R}{\mathbb{R}}
\newcommand{\V}{\mathcal{V}}
\newcommand{\Z}{\mathbb{Z}}
\newcommand{\PP}{\mathbb{P}}
\newcommand{\NR}{N_\R}
\newcommand{\abs}[1]{\left|{#1}\right|}
\newcommand{\hV}{\widehat{\V}}
\newcommand{\aff}{\operatorname{aff}}
\newcommand{\conv}{\operatorname{conv}}
\newcommand{\D}{\Delta}
\newcommand{\RR}{R}
\newcommand{\st}{\,\colon}
\newcommand{\orig}{\boldsymbol{0}}
\newcommand{\Hom}{\operatorname{Hom}}
\newcommand{\GL}{\operatorname{GL}}
\newcommand{\intr}{\operatorname{int}}
\newcommand{\lin}{\operatorname{lin}}
\newcommand{\ve}{\operatorname{vert}}
\newcommand{\vol}{\operatorname{vol}}
\newcommand{\Vol}{\operatorname{Vol}}
\newcommand{\supp}{\operatorname{supp}}
\renewcommand{\emptyset}{\varnothing}
\newtheorem{thm}{Theorem}[section]
\newtheorem{prop}[thm]{Proposition}
\newtheorem{cor}[thm]{Corollary}
\newtheorem{lem}[thm]{Lemma}
\theoremstyle{definition}
\newtheorem{defn}[thm]{Definition}
\numberwithin{equation}{section}
\begin{document}
\author[G.\,Balletti]{Gabriele Balletti}
\address{Department of Mathematics\\Stockholm University\\SE-$106$\ $91$\ Stockholm\\Sweden}
\email{balletti@math.su.se}
\author[A.\,M.\,Kasprzyk]{Alexander M.\,Kasprzyk}
\address{School of Mathematical Sciences\\University of Nottingham\\Nottingham NG7 2RD\\UK}
\email{a.m.kasprzyk@nottingham.ac.uk}
\author[B.\,Nill]{Benjamin~Nill}
\address{Fakult\"at f\"ur Mathematik\\Institut f\"ur Algebra und Geometrie\\Otto-von-Guericke-Universit\"at Magdeburg\\Universit\"atsplatz 2\\ 39106 Magdeburg\\Germany}
\email{benjamin.nill@ovgu.de}
\keywords{Lattice polytope; canonical Fano polytope; reflexive polytope; dual polytope; toric Fano variety; anti-canonical degree; canonical singularity; volume}
\subjclass[2010]{52B20 (Primary); 14M25 (Secondary)}
\title{On the maximum dual volume of a canonical Fano polytope}
\maketitle
\begin{abstract}
We give an upper bound on the volume $\vol(P^*)$ of a polytope $P^*$ dual to a $d$-dimensional lattice polytope $P$ with exactly one interior lattice point, in each dimension $d$. This bound, expressed in terms of the Sylvester sequence, is sharp, and is achieved by the dual to a particular reflexive simplex. Our result implies a sharp upper bound on the volume of a $d$-dimensional reflexive polytope. Translated into toric geometry, this gives a sharp upper bound on the anti-canonical degree $(-K_X)^d$ of a $d$-dimensional toric Fano variety $X$ with at worst canonical singularities.
\end{abstract}
\section{Introduction}\label{sec:intro}
\subsection{Background and results}
Let $N\cong\Z^d$ be a lattice of rank $d$. A convex polytope $P\subset\NR$, where $\NR:=N\otimes_\Z\R\cong\R^d$, is called a \emph{lattice polytope} if the vertices $\ve(P)$ of $P$ are contained in $N$. Two lattice polytopes $P,Q\subset\NR$ are said to be \emph{unimodular equivalent} if there exists an affine lattice automorphism $\varphi\in\GL_d(\Z)\ltimes\Z^d$ of $N$ such that $\varphi_\R(P)=Q$. Unless stated otherwise, we regard lattice polytopes as being defined only up to unimodular equivalence.

Let $P\subset\NR$ be a lattice polytope of dimension $d$ (that is, $P$ is of maximum dimension in $\NR$) containing exactly one lattice point in its (strict) interior,~i.e.\ $\abs{\intr(P)\cap N}=1$. We can assume that this interior point is the origin $\orig\in N$. For reasons that are explained in~\S\ref{subsec:alg_geom} below, we call $P$ a \emph{canonical Fano polytope}. As a consequence of results by Hensley~\cite[Theorem~3.6]{Hen83} and Lagarias--Ziegler~\cite[Theorem~2]{LZ91}, there are finitely many canonical Fano polytopes (up to unimodular equivalence) in each dimension $d$.

Canonical Fano polytopes in dimensions $d\leq 3$ have been classified~\cite{Kas10}, and we find that $\vol(P)\leq12$. For $d\geq 4$ it is conjectured that the volume of a $d$-dimensional canonical Fano polytope is bounded by
\begin{equation}\label{eq:vol_P_bound}
\vol(P)\leq\frac{1}{d!}2(s_d-1)^2,
\end{equation}
where $s_i$ denotes the $i$-th term of the \emph{Sylvester sequence}:
\[
s_1:=2,\quad s_{i+1}:=s_1\cdots s_i+1\text{ for } i\in\Z_{\ge 1}.
\]
Moreover, the case of equality in~\eqref{eq:vol_P_bound} is expected to be attained only by the canonical Fano simplex
\[
\RR_{(d)}:=S_{(d)}-\sum_{i=1}^d e_i,\qquad\text{ where } S_{(d)} :=\conv\{\orig, 2(s_d-1)e_d, s_{d-1} e_{d-1},\ldots, s_1 e_1\}.
\]
Here $\{e_1,\ldots,e_d\}$ is a basis of $N$. This conjecture is hinted at in~\cite{Reid82,ZPW82,LZ91}, explicitly stated in~\cite[Conjecture~1.7]{Nil07}, and proved by Averkov--Kr\"{u}mpelmann--Nill~\cite{AKN14} for the case when $P$ is a canonical Fano simplex. The conjecture remains open for a general canonical Fano polytope. The currently best upper bound on the volume of a canonical Fano polytope that is not a simplex is established in~\cite[Theorem~2.7]{AKN14} (improving upon a result by Pikhurko~\cite{Pik01}), however this is presumed to be far from sharp:
\[
\vol(P)\leq (s_{d+1}-1)^d.
\]

Instead of bounding $\vol(P)$, it is also natural to consider the volume of the dual polytope $P^*$ (see~\S\ref{notationnow} for the definition of the dual polytope). The main result of this paper is:

\begin{thm}\label{thm:main}
Let $P\subset\NR$ be a $d$-dimensional canonical Fano polytope, where $d\geq 4$. Then
\[
\vol(P^*)\leq\frac{1}{d!}2(s_d-1)^2,
\]
with equality if and only if $P=\RR_{(d)}^*$.
\end{thm}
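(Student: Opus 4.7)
The plan is to first reduce to the case where $P$ is itself a simplex, and then to bound $\vol(\Delta^*)$ for a canonical Fano simplex $\Delta$ via a volume identity combined with an extremal analysis modelled on \cite{AKN14}.

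\textbf{Reduction to a simplex.} Since $\orig \in \intr(P)$, a standard Carath\'eodory-type argument (applied to a generic small perturbation of $\orig$, using that $\ve(P)$ positively spans $\NR$) produces vertices $v_0,\ldots,v_d \in \ve(P)$ such that $\Delta := \conv(v_0,\ldots,v_d)$ is a $d$-simplex with $\orig \in \intr(\Delta)$. Because $\intr(\Delta) \cap N \subseteq \intr(P) \cap N = \{\orig\}$, the simplex $\Delta$ is itself a canonical Fano simplex. Dualising $\Delta \subseteq P$ gives $P^* \subseteq \Delta^*$, hence $\vol(P^*) \leq \vol(\Delta^*)$, with equality forcing $P^* = \Delta^*$ and therefore $P = \Delta$.

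\textbf{A volume identity for the simplex case.} For a canonical Fano simplex $\Delta = \conv(v_0,\ldots,v_d)$, write $\orig = \sum_{i=0}^d \lambda_i v_i$ in barycentric coordinates (so $\lambda_i > 0$ and $\sum\lambda_i = 1$). The vertex of $\Delta^*$ corresponding to the facet of $\Delta$ opposite $v_i$ is $u_i/h_i$, where $u_i$ is the primitive inward normal and $h_i \in \Z_{\geq 1}$ is the lattice distance from $\orig$ to that facet. Augmenting the $d\times(d+1)$ vertex matrices of $\Delta$ and $\Delta^*$ each by a bottom row of ones and computing the product yields $\operatorname{diag}(1/\lambda_0,\ldots,1/\lambda_d)$; taking determinants gives
\[
\vol(\Delta)\cdot\vol(\Delta^*)\cdot\prod_{i=0}^d \lambda_i = \frac{1}{(d!)^2}.
\]
Writing $V := d!\vol(\Delta)$ and $V_i := \lambda_i V = d!\vol(\conv(\orig,v_0,\ldots,\widehat{v_i},\ldots,v_d)) \in \Z_{\geq 1}$, this becomes $V = \sum_i V_i$ together with $\vol(\Delta^*) = V^d/(d!\prod_i V_i)$.

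\textbf{Extremal analysis; the main obstacle.} The simplex bound thus translates into maximising $V^d/\prod_i V_i$ over tuples $(V_0,\ldots,V_d)$ of positive integers with $V = \sum V_i$ that arise from a canonical Fano simplex. The Sylvester-type hierarchy of \cite{AKN14} (which underlies their sharp bound on $V$ itself, uniquely attained at $\RR_{(d)}$) is the natural tool, and one expects the new maximiser to correspond to $\Delta = \RR_{(d)}^*$, for which direct calculation confirms $\vol(\Delta^*) = \vol(\RR_{(d)}) = 2(s_d-1)^2/d!$. The principal obstacle is the non-reflexive case (some $h_i \geq 2$): then $\Delta^*$ is only a rational simplex and \cite{AKN14} does not apply to $\Delta^*$ directly, so the extremal analysis must be carried out purely in terms of the lattice geometry of $\Delta$ itself, re-driven by the $V^d/\prod V_i$ objective in place of the $V$ of \cite{AKN14}. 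Combined with the equality clause of the reduction step, uniqueness in this extremal analysis then pins down the equality case of the theorem as $P = \RR_{(d)}^*$.
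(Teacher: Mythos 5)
Your opening reduction is the fatal gap: it is in general \emph{impossible} to find $d+1$ vertices of a canonical Fano polytope $P$ spanning a $d$-simplex with $\orig$ in its interior. Take the cross-polytope $P=\conv\{\pm e_1,\ldots,\pm e_d\}$, which is a canonical Fano polytope for every $d$. Any choice of $d+1$ of its $2d$ vertices omits one of $\pm e_j$ for some $j$ (since $d+1<2d$), so all chosen vertices lie in a closed half-space $\{x_j\geq 0\}$ or $\{x_j\leq 0\}$ whose boundary hyperplane contains $\orig$; hence $\orig$ is never interior to the resulting simplex. The Carath\'eodory-plus-perturbation argument only places a point \emph{near} $\orig$ in the interior of some simplex, not $\orig$ itself, and no amount of genericity repairs this. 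This obstruction is precisely why minimal canonical Fano polytopes can have up to $2d$ vertices (Steinitz's inequality), and it is the whole reason the theorem does not follow from the simplex case of \cite{AKN14}: there is no canonical Fano simplex $\Delta\subseteq P$ in general, so the inequality $\vol(P^*)\leq\vol(\Delta^*)$ you rely on is simply unavailable.

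The remainder of your proposal (the identity $\vol(\Delta)\vol(\Delta^*)\prod_i\lambda_i=1/(d!)^2$ and the extremal analysis of $V^d/\prod_i V_i$) concerns only the simplex case, which is already settled in \cite[Theorem~2.5(b)]{AKN14} and is quoted as an ingredient by the paper; even if you completed the extremal analysis you flag as the ``principal obstacle'', you would not have addressed the actual difficulty. The paper's route is different in kind: it first reduces to \emph{minimal} canonical Fano polytopes (removing vertices only shrinks $P$ and hence enlarges $P^*$), decomposes such a $P$ as $\conv(S_1\cup\cdots\cup S_t)$ into up to $d$ minimal canonical Fano simplices possibly sharing vertices, and then bounds $\vol(P^*)$ by embedding $P^*$ into $S_1^*\times\cdots\times S_t^*$ and using monotonicity of normalised volume; the handful of decompositions where this crude product bound fails are treated by a finer slicing/integration estimate over the shared-vertex parameters together with explicit low-dimensional classifications. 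To repair your argument you would need to replace the single-simplex reduction by some such multi-simplex decomposition, at which point the simplex identity alone no longer suffices.
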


In three dimensions, the expected bound $\vol(P^*)\leq 12$ is proved in~\cite[Theorem~4.6]{Kas10}. In this case, however, equality is obtained by the duals of two distinct simplices:
\begin{equation}\label{eq:maximal_degree_dim_3}
P_{1,1,1,3}=\conv\{e_1,e_2,e_3,-e_1-e_2-3e_3\}\qquad\text{ and }\qquad P_{1,1,4,6}=\RR_{(3)}^*.
\end{equation}
The analogue of Theorem~\ref{thm:main} is proved in~\cite[Theorem~2.5(b)]{AKN14} for $d$-dimensional canonical Fano simplices.

Probably one of the most studied class of canonical Fano polytopes are the \emph{reflexive polytopes}, consisting of those $P\subset\NR$ such that the dual $P^*$ is also a canonical Fano polytope (for a brief survey see~\cite{KN13}). Note that $\RR_{(d)}$ is a reflexive simplex~\cite{Nil07}. An immediate consequence of Theorem~\ref{thm:main} is a proof of the conjectured inequality~\eqref{eq:vol_P_bound} in the case of reflexive polytopes:

\begin{cor}\label{cor:vol_P_bound_reflexive}
Let $P\subset\NR$ be a $d$-dimensional reflexive polytope, where $d\geq 4$. Then
\[
\vol(P)\leq\frac{1}{d!}2(s_d-1)^2,
\]
with equality if and only if $P=\RR_{(d)}$.
\end{cor}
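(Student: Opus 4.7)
The plan is to derive Corollary~\ref{cor:vol_P_bound_reflexive} as a formal consequence of Theorem~\ref{thm:main} together with the self-dual character of the reflexive condition. Concretely, if $P\subset\NR$ is a $d$-dimensional reflexive polytope, then by the definition recalled in the paragraph preceding the corollary, the dual $P^*$ is itself a $d$-dimensional canonical Fano polytope (living in the dual vector space with respect to the dual lattice $M$). Theorem~\ref{thm:main}, applied to $P^*$ in place of $P$, therefore yields
\[
\vol((P^*)^*)\leq\frac{1}{d!}2(s_d-1)^2,
\]
with equality if and only if $P^*=\RR_{(d)}^*$.

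The remaining step is biduality: since $P$ is a canonical Fano polytope it contains the origin in its interior, so the standard polar biduality for lattice polytopes with $\orig\in\intr(P)$ gives $(P^*)^*=P$. This transfers the volume bound directly to $\vol(P)$, and converts the extremal condition $P^*=\RR_{(d)}^*$ into $P=\RR_{(d)}$, matching the unique equality case asserted in the corollary.

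I expect no real obstacle here: the corollary is purely formal given Theorem~\ref{thm:main}. The only technical point worth stating carefully is that Theorem~\ref{thm:main} is intrinsic to the pair (lattice polytope, ambient lattice), so invoking it for the polytope $P^*$ inside the dual lattice $M$ is unproblematic; the Sylvester-sequence bound depends only on the dimension $d$. All the actual work is concentrated in Theorem~\ref{thm:main} itself.
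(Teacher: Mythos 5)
Your argument is correct and is exactly the intended one: the paper treats the corollary as an immediate consequence of Theorem~\ref{thm:main}, namely apply the theorem to the canonical Fano polytope $P^*$ (using reflexivity) and conclude via biduality $(P^*)^*=P$, with the equality case $P^*=\RR_{(d)}^*$ dualising to $P=\RR_{(d)}$ since $\RR_{(d)}$ is reflexive. No gaps.
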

\noindent
The analogue of Corollary~\ref{cor:vol_P_bound_reflexive} in the case of reflexive simplices is proved in~\cite[Theorem~A]{Nil07}.

\subsection{Toric geometry and Fano varieties}\label{subsec:alg_geom}
Canonical Fano polytopes arise naturally in algebraic geometry. To each $d$-dimensional canonical Fano polytope $P\subset\NR$ we can associate a $d$-dimensional projective toric variety $X_P$ whose fan is given by the cones in $\NR$ spanning the faces of $P$ (here we require that the unique interior point of $P$ is taken to be the origin $\orig$ of $N$). This variety is Fano -- recall that a variety $X$ is Fano if its anti-canonical divisor $-K_X$ is ample -- and has at worst canonical singularities. In fact this construction is reversible, and there exists a one-to-one correspondence between (unimodular equivalence classes of) canonical Fano polytopes and (isomorphism classes of) toric Fano varieties with at worst canonical singularities.
For details on canonical singularities and their importance in algebraic geometry, see~\cite{Rei87}; for details on toric geometry, see~\cite{Dan78}; and for additional background material see the survey~\cite{KN13}.

The classification of Fano varieties is a long-standing open problem. An important advance would be to bound the degree $(-K_X)^d$. In the case when $X$ is non-singular the bound
\begin{equation}\label{eq:KMM}
(-K_X)^d\leq\left(3(2^d - 1)(d + 1)^{(d + 1)(2^d - 1)}\right)^d
\end{equation}
was established by Koll\'{a}r--Miyaoka--Mori~\cite{KMM92}, although this is almost certainly not sharp. Very little is known when $X$ has canonical singularities, however Prokhorov~\cite{Pro05} proved that if $X$ is a three-dimensional Fano with Gorenstein canonical singularities then the degree is bounded by $(-K_X)^3\leq 72$. In this case the maximum degree is obtained by the two weighted projective spaces $\PP(1,1,1,3)$ and $\PP(1,1,4,6)$, and these two toric varieties correspond to the two canonical Fano simplices in~\eqref{eq:maximal_degree_dim_3}. It is tempting to conjecture that, in higher dimensions, the maximum degree is obtained by a toric Fano variety. Recalling that $(-K_{X_P})^d=d!\vol(P^*)$, Theorem~\ref{thm:main} provides a sharp bound on the degree when $X$ is toric:

\begin{cor}\label{cor:bound_canonical_Fano_degree}
Let $X$ be a $d$-dimensional toric Fano variety with at worst canonical singularities, where $d\ge 4$. Then
\begin{equation}\label{eq:canonical_degree_bound}
(-K_X)^d\leq 2 (s_d-1)^2,
\end{equation}
with equality if and only if $X$ is isomorphic to the weighted projective space
\[
\PP\left(1,1,2 (s_d-1) / s_{d-1},\ldots, 2 (s_d-1)/s_1\right).
\]
\end{cor}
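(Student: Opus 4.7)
The plan is to reduce Corollary~\ref{cor:bound_canonical_Fano_degree} to Theorem~\ref{thm:main} via the correspondence (recalled in \S\ref{subsec:alg_geom}) between $d$-dimensional toric Fano varieties with at worst canonical singularities and $d$-dimensional canonical Fano polytopes. Writing $X = X_P$ for such a polytope $P$, the degree formula $(-K_X)^d = d!\,\vol(P^*)$ turns the asserted inequality \eqref{eq:canonical_degree_bound} into precisely $d!$ times the inequality of Theorem~\ref{thm:main}. The equality statement of Theorem~\ref{thm:main} forces $P = \RR_{(d)}^*$, so the entire corollary reduces to identifying the toric variety $X_{\RR_{(d)}^*}$ with the asserted weighted projective space.

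For this identification I would invoke the standard fact that a canonical Fano simplex $Q = \conv(w_0,\ldots,w_d) \subset \NR$ satisfies $X_Q \cong \PP(b_0,\ldots,b_d)$, where $(b_0,\ldots,b_d)$ is the unique primitive tuple of positive integers with $\sum_j b_j w_j = \orig$. I would then compute the vertices of $\RR_{(d)}^*$ explicitly from the defining relations $\langle w_j, v_i\rangle = -1$ for $i \ne j$, where $v_0, v_1,\ldots,v_d$ are the vertices of $\RR_{(d)}$ listed in the introduction. Using the Sylvester identity $\sum_{i=1}^{d-1} 1/s_i = 1 - 1/(s_d-1)$, the linear system solves to $w_j = e_j$ for $j = 1,\ldots,d$ and $w_0 = -e_d - \sum_{i=1}^{d-1}(2(s_d-1)/s_i)\,e_i$; integrality of the coefficients $2(s_d-1)/s_i = 2\prod_{k\ne i,\,k<d} s_k$ confirms that $\RR_{(d)}^*$ is a lattice simplex, consistent with the reflexivity of $\RR_{(d)}$.

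Reading the relation $\sum_j b_j w_j = \orig$ off coordinatewise then gives $b_d = b_0$ and $b_i = (2(s_d-1)/s_i)\,b_0$ for $i = 1,\ldots,d-1$. Setting $b_0 = 1$ produces a primitive positive tuple and yields exactly the weighted projective space appearing in the statement (up to reordering of the weights). There is no serious obstacle beyond Theorem~\ref{thm:main} itself; the remaining work is a routine linear algebra computation powered by the telescoping identity for the Sylvester sequence, together with the well-known dictionary between reflexive simplices and weighted projective spaces.
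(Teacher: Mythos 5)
Your proposal is correct and takes essentially the same (largely implicit) route as the paper: reduce to Theorem~\ref{thm:main} via the bijection between canonical Fano polytopes and toric Fano varieties with canonical singularities together with the degree formula $(-K_{X_P})^d = d!\,\vol(P^*)$, and then identify $X_{\RR_{(d)}^*}$ with the stated weighted projective space, which your explicit vertex and weight computation does correctly. The one point to state carefully is that a canonical Fano simplex with weight relation $\sum_j b_j w_j = \orig$ yields in general only a \emph{fake} weighted projective space (a finite quotient of $\PP(b_0,\ldots,b_d)$); here this is harmless because your computation shows the vertices of $\RR_{(d)}^*$ contain the dual basis $e_1^*,\ldots,e_d^*$, so they generate the ambient lattice and the identification with the honest weighted projective space holds.
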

\noindent
This extends~\cite[Theorem~A]{Nil07} and~\cite[Theorem~2.11]{AKN14}, where analogous results are stated when $X$ is a Gorenstein fake weighted projective space, and when $X$ is a fake weighted projective space with at worst canonical singularities, respectively. Corollary~\ref{cor:bound_canonical_Fano_degree} also generalises the three-dimensional bound of~\cite[Theorem~4.6]{Kas10}.

Finally, Corollary~\ref{cor:bound_canonical_Fano_degree} also has implications for current attempts to classify non-singular Fano varieties via Mirror Symmetry~\cite{CCGK16}. Here the hope is that a non-singular Fano variety $X$ with $-K_X$ very ample has a $\Q$-Gorenstein deformation to a Gorenstein canonical toric Fano variety $X_P$. Since this deformation would leave the degree unchanged, so the bound of Corollary~\ref{cor:bound_canonical_Fano_degree} would apply to $X$. It is interesting to note that, in this case, the bound~\eqref{eq:canonical_degree_bound} is significantly smaller that the bound~\eqref{eq:KMM} of Koll\'{a}r--Miyaoka--Mori.

\subsection{Overview of the proof}
Our strategy to prove Theorem~\ref{thm:main} is as follows. In~\S\ref{sec:2} we reduce the problem to canonical Fano polytopes satisfying some minimality condition. We observe that such polytopes admit a decomposition into canonical Fano simplices (following~\cite{Kas10}; compare also with the decomposition used in~\cite{KS97}), for which the statement is already known~\cite{AKN14}. In~\S\ref{sec:monotonicity} we use this decomposition, together with the monotonicity of the normalised volume, to prove Theorem~\ref{thm:main} in the majority of cases (Corollary~\ref{cor:almostallcases}). Finally, the remaining cases are proved in~\S\ref{sec:final} using a mixture of integration techniques (developed in~\S\S\ref{sec:slicing}--\ref{sec:integration}) and explicit classification.

\subsection{Notation and terminology}\label{notationnow}
Let $P\subset\NR$ be a lattice polytope of maximum dimension in a rank $d$ lattice $N\cong\Z^d$, and let $M:=\Hom_{\Z}(N,\Z)\cong\Z^d$ be the lattice dual to $N$. The \emph{dual} (or \emph{polar}) polyhedron of $P$ is:
\[
P^*:=\{y\in M_\R\st\langle y,x\rangle\geq-1\text{ for every }x\in P\}.
\]
If $\orig\in P$ then $P^*$ is a convex polytope, although typically $P^*$ has rational vertices, and so is not a lattice polytope.

Let $P$ and $Q$ be two maximum-dimensional polytopes in $(N_P)_\R\cong\R^p$ and $(N_Q)_\R\cong\R^q$, respectively. Suppose that $P$ and $Q$ contain the origin $\orig_{P}\in N_P$ and $\orig_{Q}\in N_Q$ of their respective ambient space. The \emph{free sum} (or \emph{direct sum}) is the maximum-dimensional polytope
\[
P\oplus Q:=\conv((P\times\{\orig_Q\})\cup(\{\orig_P\}\times Q))\subset\R^{p+q}.
\]
The \emph{product} is the polytope
\[
P\times Q:=\{(x_p,x_q)\st x_p\in P,\, x_q\in Q\}\subset\R^{p+q}.
\]
Free sums and products of polytopes are related, via duality, by:
\[
(P\oplus Q )^*=P^*\times Q^*. 
\]

On the affine hull $\aff(P)$ there exists a volume form, called the \emph{relative lattice volume}, that is normalised by setting the volume of a fundamental parallelepiped of $\aff_\Z(P)$ equal to $1$. We denote the relative lattice volume of $P$ by $\vol_N(P)$. The volume $\Vol_N(P):=\dim(P)!\vol_N(P)$ is often called the \emph{normalised lattice volume} of P. If $N'\subseteq N$ is a sublattice of $N$ then, for $S\subseteq\lin(N')$, we have $\vol_{N'}(S)\leq\vol_N(S)$. If in addition we have that $N'\to N$ splits over $\Z$, then $\vol_{N'}(S)=\vol_N(S)$.

\section{Decomposition of minimal polytopes}\label{sec:2}
The case of canonical Fano simplices is already considered in~\cite{AKN14}. Our focus is on the case when $P$ is not a simplex. Notice that if $P\subsetneq Q$ then $Q^*\subsetneq P^*$, and hence $\vol(Q^*)<\vol(P^*)$. It is therefore sufficient to prove Theorem~\ref{thm:main} for ``small'' polytopes $P$; that is, for the \emph{minimal} canonical Fano polytopes:

\begin{defn}[\!\!{\cite[Definition~2.2]{Kas10}}]
A $d$-dimensional canonical Fano polytope $P\subset\NR$ is \emph{minimal} if, for each vertex of $P$, the polytope obtained by removing this vertex is not a $d$-dimensional canonical Fano polytope; that is, if $\conv(P\cap N\setminus\{v\})$ is not a $d$-dimensional canonical Fano polytope, for each $v\in\ve(P)$.
\end{defn}
\noindent
Each canonical Fano polytope $Q$ can, via successive removal of vertices, be reduced to a minimal polytope $P\subset Q$. Of course $P$ need not be uniquely determined. Minimal canonical Fano polytopes admit a decomposition in terms of lower-dimensional minimal canonical Fano simplices:

\begin{prop}[\!\!{\cite[Proposition~3.2]{Kas10}}]\label{prop:Kasp}
Let $P$ be a minimal canonical Fano $d$-polytope that is not a simplex. Then there exists a minimal canonical Fano $k$-simplex $S$ contained in $P$ with $\ve(S)\subset\ve(P)$, for some $1\le k<d$. For any such $S$ there exists a minimal canonical Fano $(d-k+s)$-polytope $P'$ with $\ve(P')\subset\ve(P)$ such that $P=\conv(S\cup P')$, $s=\abs{\ve(S)\cap\ve(P')}$, and $0\leq s < k$.
\end{prop}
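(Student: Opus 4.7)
The plan has two stages: first extract a minimal canonical Fano sub-simplex $S$ from $P$, then construct a complementary polytope $P'$ with $P=\conv(S\cup P')$.

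For the first stage, since $\orig\in\intr(P)$, a Carath\'eodory-type interior argument produces an affinely independent subset $T\subseteq\ve(P)$ with $\orig$ in the relative interior of $\conv(T)$: reduce a strictly positive convex combination of all of $\ve(P)$ equal to $\orig$ to an affinely independent one. Take $T$ minimal under inclusion with this property and set $S:=\conv(T)$, $k:=\dim S$; every vertex of $S$ is then necessary to keep $\orig$ in the relative interior. To verify that $\orig$ is the only lattice point in the relative interior of $S$, suppose there is another such lattice point $w\ne\orig$. Since $w\in P\cap N$ and $w\ne\orig$, the canonical Fano property of $P$ forces $w$ to lie on the boundary of $P$, hence in some facet $F$ cut out by $\langle u,\cdot\rangle\ge-1$. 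Writing $w=\sum_{v\in T}\mu_v v$ with $\mu_v>0$ and $\sum\mu_v=1$, expanding $\langle u,w\rangle=-1$ and using $\langle u,v\rangle\ge-1$ forces $\langle u,v\rangle=-1$ for every $v\in T$; thus $T\subseteq F$, so $\orig\in\conv(T)\subseteq F$, contradicting $\orig\in\intr(P)$. Hence $S$ is a minimal canonical Fano $k$-simplex with $1\le k\le d$. The case $k=d$ is excluded: since $P$ is not a simplex there is $v\in\ve(P)\setminus\ve(S)$, and then $\conv(\ve(P)\setminus\{v\})\supseteq S$ is a $d$-dimensional canonical Fano polytope, contradicting the minimality of $P$. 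Thus $k<d$.

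For the second stage, fix $S$ and set $W:=\ve(P)\setminus\ve(S)$. Consider subsets $V\subseteq\ve(S)$ for which $Q_V:=\conv(W\cup V)$ is a canonical Fano polytope in its own affine span (with $\orig$ strictly in the relative interior). This collection is nonempty since $V=\ve(S)$ gives $Q_V=P$, so pick $V$ of minimum cardinality $s$ and set $P':=Q_V$. The unique relative interior lattice point property of $P'$ is inherited from $P'\subseteq P$, and the minimal choice of $|V|$ forces $P'$ to be itself minimal canonical Fano. The equalities $s=\abs{\ve(S)\cap\ve(P')}$ (the vertices of $P'$ lying in $\ve(S)$ are exactly those of $V$) and $P=\conv(S\cup P')$ (from $\ve(P)=\ve(S)\cup W\subseteq\ve(S)\cup\ve(P')\subseteq S\cup P'$) follow immediately. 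The dimension formula $\dim(P')=d-k+s$ comes from a bookkeeping argument on how adjoining vertices of $S$ to $\conv(W)$ increases the dimension: each adjoined vertex raises the dimension by at most one, and the minimal choice of $V$ together with the extremal properties of $S$ forces exact equality, while the base dimension of $\conv(W)$ works out to $d-k$. Finally, $s<k$ holds because $s=k$ would force $V=\ve(S)\setminus\{v_0\}$ for some $v_0\in\ve(S)$, making $Q_V=\conv(\ve(P)\setminus\{v_0\})$ a $d$-dimensional canonical Fano polytope and thus contradicting the minimality of $P$.

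The principal obstacle is the precise verification of the dimension formula $\dim(P')=d-k+s$, which requires showing that the base dimension of $\conv(W)$ and the dimensional increments from $V$ combine exactly as claimed; this rests on the minimal choice of $V$ together with the extremal properties of $S$ established in the first stage. A more straightforward but equally essential ingredient is the first-stage argument showing that the extremally chosen simplex $S$ is itself canonical Fano, which uses the canonical Fano hypothesis on $P$ in a crucial way via the facet argument above.
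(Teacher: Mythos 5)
The paper itself offers no proof of this proposition (it is imported wholesale from \cite[Proposition~3.2]{Kas10}), so your argument has to stand on its own, and it has gaps. Your Carath\'eodory extraction of $T$ and the facet argument showing that $\orig$ is the only lattice point in the relative interior of $S$ (and, similarly, of $P'$) are fine, as is the exclusion of $k=d$. The first real problem is minimality. In this paper, minimality of a canonical Fano polytope $Q$ means that for \emph{every} vertex $v$, the hull of \emph{all lattice points} of $Q$ other than $v$ fails to be a full-dimensional canonical Fano polytope; your justifications address different conditions. ``Every vertex of $T$ is necessary to keep $\orig$ in the relative interior'' holds for any simplex with $\orig$ in its relative interior and does not give minimality: the triangle $\conv\{(1,0),(-1,2),(-1,-2)\}$ is a canonical Fano simplex, yet deleting the vertex $(-1,2)$ from its lattice points leaves a two-dimensional hull (containing $(1,0),(0,1),(-1,1),(-1,-2)$) with $\orig$ in its interior, so it is not minimal. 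Likewise, minimality of $\abs{V}$ says nothing about deleting a vertex of $P'$ lying in $W$, nor about non-vertex lattice points of $P'$. Both minimality claims are true, but the proof must invoke the minimality of $P$, which you never use for this purpose: for $v\in\ve(P)$, minimality of $P$ yields a nonzero $u$ with $\langle u,v\rangle<0\le\langle u,x\rangle$ for all $x\in(P\cap N)\setminus\{v\}$; restricting $u$ to $\lin(Q)$ shows that any $Q\subseteq P$ with $\ve(Q)\subseteq\ve(P)$, $v\in\ve(Q)$ and $\orig$ in the relative interior of $Q$ is minimal at $v$ (if $u$ vanishes on $\lin(Q)$ then $\langle u,v\rangle=0$, a contradiction; otherwise $u\ge 0$ on the lattice points of $Q$ other than $v$, so $\orig$ cannot be interior to their hull). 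This one lemma repairs both Stage 1 and Stage 2.

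The more serious gap is the one you flag yourself: the dimension count, and your sketch rests on a false premise. Since $\orig\in P'$ we have $\aff(P')=\lin(W)+\lin(V)$, and since $\orig\in\intr(P)$ forces the image of $W$ to span the quotient $\NR/\lin(S)$, one gets $\dim\lin(W)=d-k+\dim\bigl(\lin(W)\cap\lin(S)\bigr)$, not $d-k$; consequently
\[
\dim(P')=(d-k)+\dim\Bigl(\bigl(\lin(W)\cap\lin(S)\bigr)+\lin(V)\Bigr)\;\ge\;d-k+s,
\]
with equality precisely when $\lin(W)\cap\lin(S)\subseteq\lin(V)$ (any proper $V\subsetneq\ve(S)$ is automatically linearly independent). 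A minimum-cardinality $V$ is chosen only so that \emph{some} vector of $\lin(W)\cap\lin(S)$ lies in the cone over $V$; nothing forces the whole subspace into $\lin(V)$, so your $P'$ may have dimension strictly larger than $d-k+s$, and the proposition would then require a different choice of $V$. As it stands, the second half of the statement is not proved. Two smaller points: your exclusion of $s=k$ works once you observe that $k$ of the $k+1$ vertices of $S$ already span $\lin(S)$, so $Q_V$ is then automatically $d$-dimensional; but you should also rule out that no proper subset of $\ve(S)$ is admissible at all, which follows because the cones over the proper faces of $S$ cover $\lin(S)$ (as $\orig$ lies in the relative interior of $S$), so the vertex set of some proper face already works.
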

\noindent
For brevity we write ``$d$-polytope'' rather than ``polytope of dimension $d$'', and ``$k$-simplex'' rather than ``simplex of dimension $k$''.

\begin{cor}\label{cor:Kasp}
Let $P$ be a minimal canonical Fano $d$-polytope that is not a simplex. Then, for some $2\le t\le d$, there exist minimal canonical Fano simplices $S_1,\ldots,S_t$ such that $P=\conv(S_1\cup\ldots\cup S_t)$, where $\dim(S_i)=d_i\geq 1$ and $\ve(S_i)\subset\ve(P)$, for each $1\leq i\leq t$. Set $r_1:=0$ and, for each $2\leq i\leq t$, set $r_i:=\abs{\ve(S_i)\cap\ve(P^{(i-1)})}$, where $P^{(i-1)}:=\conv(S_1\cup\ldots\cup S_{i-1})$. Then:
\begin{align}
\label{eq:dr}
d_1+\cdots+d_t &= d + r,&\quad\text{ where }r:=r_1 +\cdots + r_t;\\
\label{eq:di}
r_i < d_i &\leq d -t +1,&\quad\text{ for each }1\leq i\leq t;\\
\label{vertices}
\abs{\ve(P)}&=d+t.&
\end{align}
\end{cor}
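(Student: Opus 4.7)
The plan is to iterate Proposition~\ref{prop:Kasp}. Applied to $P$, it yields a minimal canonical Fano simplex $S$ with $\dim(S)<d$ and a minimal canonical Fano polytope $P'$ of dimension $d-\dim(S)+\abs{\ve(S)\cap\ve(P')}$ such that $P=\conv(S\cup P')$; set $S_t:=S$. If $P'$ is itself a simplex, put $S_1:=P'$ and terminate with $t=2$; otherwise, recurse on $P'$ to produce $S_1,\ldots,S_{t-1}$. By construction each $P^{(i)}:=\conv(S_1\cup\cdots\cup S_i)$ is a minimal canonical Fano polytope with $P^{(1)}=S_1$ and $P^{(t)}=P$, and the proposition applied at step $i\ge 2$ supplies the dimension recursion $\dim(P^{(i)})=\dim(P^{(i-1)})+d_i-r_i$ together with the inequality $r_i<d_i$.

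Equations~\eqref{eq:dr} and~\eqref{vertices} then follow by telescoping. Summing the dimension recursion from $i=2$ to $t$ with base $\dim(P^{(1)})=d_1$ and endpoint $\dim(P^{(t)})=d$ yields $d=\sum_i d_i-r$, which is~\eqref{eq:dr}. For~\eqref{vertices}, $S_1$ contributes $d_1+1$ vertices and each $S_i$ with $i\ge 2$ contributes exactly $d_i+1-r_i$ new vertices to $P^{(i)}$; summing and substituting~\eqref{eq:dr} gives $\abs{\ve(P)}=\sum_i(d_i+1)-r=(d+r)+t-r=d+t$. The lower bound $r_i<d_i$ of~\eqref{eq:di} is already recorded (and trivial for $i=1$, as $r_1=0<1\le d_1$).

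The upper bound $d_i\le d-t+1$ in~\eqref{eq:di} is the delicate part, and I would handle it by induction on $t$ (equivalently, on $\dim(P)$). The base $t=2$ is direct: $d_2<d$ by Proposition~\ref{prop:Kasp}, and $d_1=d-d_2+r_2\le d-1$ since $r_2<d_2$. For $t\ge 3$, applying the inductive hypothesis to the decomposition $P'=\conv(S_1\cup\cdots\cup S_{t-1})$ of the $(d-d_t+r_t)$-dimensional polytope $P'$ gives $d_i\le(d-d_t+r_t)-(t-1)+1$ for $i<t$; combining with $r_t\le d_t-1$ collapses this to $d_i\le d-t+1$. For the last index $i=t$, the natural count from the recursive construction yields only $d_t<d$, and closing the gap to $d-t+1$ requires exploiting the freedom in Proposition~\ref{prop:Kasp} to choose a different starting simplex inside $P$, thereby relocating $S_t$ to an earlier position in the chain where the previous case applies; this reordering step is the principal obstacle I expect. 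The range $2\le t\le d$ then follows from $P$ not being a simplex, together with $d_i\ge 1$ and the upper bound just established.
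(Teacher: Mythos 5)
Your iterative construction, the telescoping proofs of \eqref{eq:dr} and \eqref{vertices}, and the lower bound in \eqref{eq:di} coincide with the paper's argument. The genuine gap is exactly the step you flag yourself: the upper bound $d_t\le d-t+1$ for the simplex extracted at the first application of Proposition~\ref{prop:Kasp}. Your induction handles $i<t$, but for $i=t$ you only obtain $d_t\le d-1$, which is weaker than $d-t+1$ as soon as $t\ge 3$, and nothing in your recursion rules out an unlucky first choice of $S$ -- say a $(d-1)$-dimensional simplex sharing a vertex with a residual polytope $P'$ that is two-dimensional but not a simplex, which would give $t\ge 3$ with $d_t=d-1>d-t+1$. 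Since the corollary is an existence statement, one must show the decomposition can be \emph{chosen} so that every $d_i$, including $d_t$, satisfies the bound; the ``relocating/reordering'' you gesture at is precisely the missing content, and it is not something the bare iteration provides.

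The paper closes this gap with a greedy choice made from the very first step rather than an a posteriori reordering: at every application of Proposition~\ref{prop:Kasp} one takes $S$ of \emph{smallest possible} dimension among the minimal canonical Fano simplices contained in the current polytope with vertices among its vertices. Any simplex extracted at a later stage (including the final residual simplex $S_1$) lies in the current polytope with vertices among its vertices, hence was an admissible candidate at every earlier stage; therefore the dimensions are monotone along the extraction order, $d_t\le d_{t-1}\le\cdots\le d_1$. Since $r_i<d_i$ forces $\dim(P^{(i)})\ge\dim(P^{(i-1)})+1$, one gets $d\ge d_1+(t-1)$, i.e.\ $d_1\le d-t+1$ and $t\le d$, and the monotonicity transfers this bound to every $d_i$, in particular to $d_t$. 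With this choice your induction on $t$ becomes unnecessary and the rest of your argument goes through unchanged.
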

\noindent
An example of this decomposition is illustrated in Figure~\ref{fig:1}.

\begin{proof}
We apply Proposition~\ref{prop:Kasp} iteratively, at each step choosing $S$ to be of smallest possible dimension. Thus $P$ can be written as $P=\conv(S_1\cup\ldots\cup S_t)$ for some $t\geq 1$, where the $S_i$ are minimal canonical Fano simplices of dimension $d_i\ge 1$ with $\ve(S_i)\subseteq\ve(P)$ having $r_i$ common vertices with $P^{(i-1)}$, such that $d_t\le d_{t-1}\le\cdots\le d_1$. The case $P^{(0)}$ is taken to be the empty set, giving $r_1=0$. At each step, the dimension of $P^{(i)}$ can be obtained from Proposition~\ref{prop:Kasp}: $\dim(P^{(i)})=\dim(P^{(i-1)})+\dim(S_i)-r_i$. Hence $d=\sum_{i=1}^t(d_i-r_i)$, and so~\eqref{eq:dr} holds. Once again using Proposition~\ref{prop:Kasp}, since $\dim(S_i)>r_i$, so $\dim(P^{(i)})\geq\dim(P^{(i-1)})+1$. It follows that $t\leq d$ and so $d_1\leq d-t+1$. Hence our choice of simplices implies~\eqref{eq:di}. Finally, the number of vertices of $P^{(i)}$ is $\abs{\ve(P^{(i-1)})}+\abs{\ve(S_i)}-r_i$. This implies that $\abs{\ve(P)}=\sum_{i=1}^t(d_i+1)-r$, and from~\eqref{eq:dr} we deduce that equation~\eqref{vertices} holds.
\end{proof}
\noindent
Notice that equality~\eqref{vertices}, combined with the bound $t\leq d$, implies that a minimal canonical Fano polytope $P$ satisfies $\abs{\ve(P)}\leq 2 d$ (this is known as \emph{Steinitz's inequality}).

\begin{figure}[tb]
\centering
\begin{tikzpicture}[scale=0.8]
\draw[fill,black!15] (2,-1)--(3.5,0)--(2.5,2.5)--cycle;
\draw[fill,black!35] (2.8,0.4)--(5.5,0.3)--(2.5,2.5)--cycle;
\draw[fill,black!35] (0,0.5)--(2.2,0.4)--(2.5,2.5)--cycle;
\draw[thick] (0,0.5)--(2,-1)--(5.5,0.3);
\draw[dashed] (2,-1)--(3.5,0)--(5.5,0.3);
\draw[thick] (2.5,2.5)--(2,-1);
\draw[thick] (2.5,2.5)--(0,0.5);
\draw[dashed] (2.5,2.5)--(3.5,0)--(0,0.5);
\draw[thick] (2.5,2.5)--(5.5,0.3);
\draw[fill=black] (2.73,1) circle (0.2em) node[below left]{$\orig$};
\draw (1.3,2.2) node{$P$};
\draw (1.4,0.9) node{$S_1$};
\draw (2.5,-0.25) node{$S_2$};
\draw (2.5,2.5) node[above]{$v$};
\end{tikzpicture}
\caption{An example of a three-dimensional minimal canonical Fano polytope $P$ which decomposes into two canonical Fano simplices $S_1$ and $S_2$ sharing a common vertex $v$. In the notation of Corollary~\ref{cor:Kasp}, $d=3$, $t=2$, $d_1=d_2=2$, and $r_2=1$.}\label{fig:1} 
\end{figure}
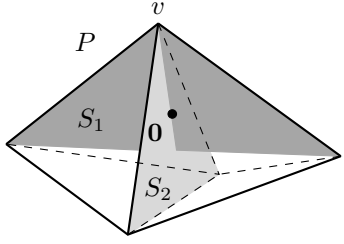

\section{Bounding the volume of $P^*$ via monotonicity of the normalised volume}\label{sec:monotonicity}
As noted above, it is sufficient to prove Theorem~\ref{thm:main} for minimal canonical Fano polytopes that are not simplices. Let $P\subset\NR$ be such a polytope of dimension $d\geq 4$. Fix a decomposition of $P$, and use the notation $t,S_i,d_i,r_i,r$ as defined in Corollary~\ref{cor:Kasp}. In this section we prove Theorem~\ref{thm:main} for the majority of decompositions. The decompositions \emph{not} addressed in this section, and whose proof is the focus of~\S\S\ref{sec:slicing}--\ref{sec:integration} below, are listed in Corollary~\ref{cor:almostallcases}.

\begin{cor}\label{cor:almostallcases}
In order to prove Theorem~\ref{thm:main} it is enough to verify that the inequality
\[
\vol(P^*)\leq\frac{1}{d!}2(s_d-1)^2
\]
holds for all minimal canonical Fano polytopes $P\subset\NR$ of dimension $d\ge 4$ whose decomposition into minimal canonical Fano simplices falls into one of the following five cases:
\begin{enumerate}
\item $t=2$ and $d_1=d_2=d-1$; or
\item $t=2$, $d=4$, $d_1=3$, and $d_2=2$; or
\item $t=2$, $d=5$, $d_1=4$, and $d_2=3$; or
\item $t=3$, $d=4$, and $d_1=d_2=d_3=2$; or
\item $t=3$, $d=5$, and $d_1=d_2=d_3=3$.
\end{enumerate}
\end{cor}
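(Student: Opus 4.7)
The plan is to combine a multiplicative upper bound on $\vol(P^*)$ -- derived from monotonicity of the normalised volume applied to the decomposition of Corollary~\ref{cor:Kasp} -- with the simplex case of Theorem~\ref{thm:main} (proved in~\cite[Theorem~2.5(b)]{AKN14}), and then reduce the remaining task to a finite arithmetic verification.

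Fix a decomposition $P=\conv(S_1\cup\cdots\cup S_t)$ with parameters $t$, $d_i$, $r_i$, $r$ as in Corollary~\ref{cor:Kasp}. The first step is to establish a multiplicative bound of the form
\[
\vol(P^*)\ \leq\ C(t,d_1,\ldots,d_t,r_1,\ldots,r_t)\,\prod_{i=1}^t\vol(S_i^*),
\]
for an explicit combinatorial constant $C\geq 1$. When $r=0$ and the $S_i$ lie in transverse linear subspaces of $\NR$, we have $P=S_1\oplus\cdots\oplus S_t$ and $P^*=S_1^*\times\cdots\times S_t^*$, so the bound holds with equality and $C=1$. For $r>0$ the simplices share vertices and $P^*$ does not split as a product, but for each $i$ the natural projection $\pi_i\colon M_\R\to\lin(S_i)^*$ satisfies $\pi_i(P^*)\subseteq S_i^*$; fitting these projections together using the overlap data from Corollary~\ref{cor:Kasp} yields a multiplicative bound with a combinatorial factor $C$ recording the shared vertices.

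Substituting the AKN simplex bound $\vol(S_i^*)\leq\tfrac{1}{d_i!}2(s_{d_i}-1)^2$ reduces the problem to checking
\[
C(t,d_1,\ldots,d_t,r_1,\ldots,r_t)\,\prod_{i=1}^t\frac{2(s_{d_i}-1)^2}{d_i!}\ \leq\ \frac{2(s_d-1)^2}{d!}
\]
for every admissible tuple outside the five listed. Using the recursion $s_d-1=s_{d-1}(s_{d-1}-1)$, equivalently $s_d-1=s_1s_2\cdots s_{d-1}$, both sides become explicit products over the Sylvester sequence. The constraints $\sum d_i=d+r$ and $d_i\leq d-t+1$ from Corollary~\ref{cor:Kasp} limit the admissible tuples to finitely many for each $d$, and the doubly exponential growth of $s_d$ makes $(s_d-1)^2/d!$ overwhelmingly larger than $\prod(s_{d_i}-1)^2/d_i!$ unless $t$ is small or some $d_i$ is close to $d$. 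A direct enumeration then pins down exactly the five borderline families listed in the statement.

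The main obstacle is the multiplicative bound when $r>0$. Since $P^*$ is not a product, one must convert the collection of projections $\pi_i(P^*)\subseteq S_i^*$ into a genuine $d$-dimensional volume estimate -- essentially a section-versus-product volume comparison on the image of $M_\R$ in $\bigoplus_i\lin(S_i)^*$ -- and control the factor $C$ sharply enough to cover all non-exceptional tuples. In the five listed cases the $d_i$ are maximal relative to $d-t+1$, so the AKN estimate leaves no slack and this coarse approach is insufficient; those borderline decompositions are instead treated in~\S\ref{sec:final} via the integration techniques developed in~\S\S\ref{sec:slicing}--\ref{sec:integration}.
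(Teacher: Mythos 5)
Your overall strategy (embed the decomposition data into a product bound via monotonicity, plug in the AKN simplex bound, then enumerate) is the paper's strategy in \S\ref{sec:monotonicity}, but there are two genuine gaps. First, the multiplicative bound itself is left unproved: you only record that each projection of $P^*$ lands in $S_i^*$ and postulate an unspecified constant $C$ ``recording the shared vertices'', and you yourself flag this as the main obstacle. A collection of projection containments does not by itself give a sharp product bound on a $d$-dimensional volume. The paper instead uses the surjection $\varphi_\R(S_1\oplus\cdots\oplus S_t)=P$, dualises it to an \emph{embedding} $P^*\cong\varphi_\R^*(P^*)\subset S_1^*\times\cdots\times S_t^*$, and then applies monotonicity of the \emph{normalised} volume across dimensions (the product has dimension $d+r$, not $d$); this produces the explicit factor $\frac{(d_1+\cdots+d_t)!}{d_1!\cdots d_t!}$ in \eqref{eq:geq3}, which is exactly the constant your argument needs but never pins down. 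A small further slip: for $d_i=2$ the correct bound on $\Vol(S_i^*)$ is $9$, not $2(s_2-1)^2=8$ (this is why the paper sets $B_i=9$ in that case).

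Second, and more decisively, even granting the paper's bound, your claim that ``a direct enumeration then pins down exactly the five borderline families'' is false. The enumeration of tuples for which \eqref{eq:provet3} fails (Lemma~\ref{lem:provet3_cases}) produces \emph{nine} exceptional decomposition types, four of which are not on your list: $t=3$, $d=4$, $(d_1,d_2,d_3)=(2,2,1)$; $t=3$, $d=5$, $(3,3,2)$; $t=3$, $d=6$, $(4,4,4)$; and $t=4$, $d=5$, $(2,2,2,2)$. (For instance, for $t=4$, $d=5$ one has $\frac{8!}{(2!)^4}\,9^4=16{,}533{,}720>2(s_5-1)^2=6{,}523{,}272$, so the coarse bound cannot dispose of this case.) The paper needs an extra argument to reduce these four cases to the five listed ones: regroup the decomposition as $P=\conv(P'\cup S_t)$ with $P'=\conv(S_1\cup\cdots\cup S_{t-1})$ of dimension $d-1$, observe that Theorem~\ref{thm:main} already holds for $P'$ (its decomposition has $t'=t-1$ and falls under the cases covered by Lemma~\ref{lem:provet3_cases} or assumed in the Corollary), and then verify the sharper two-factor inequality $\frac{(d-1+d_t)!}{(d-1)!\,d_t!}\,B_{d-1}B_{d_t}<B_d$ as in \eqref{eq:geq3cor}. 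Without this bootstrapping step your reduction leaves four decomposition types unaccounted for, so the Corollary as stated does not follow from your argument.
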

\noindent
In order to prove Corollary~\ref{cor:almostallcases} we use the monotonicity of the normalised volume.
Let $N_i:=\lin_{\R}(S_i)\cap N$ be the sublattice of lattice points in the linear hull of $S_i$ (recall that $\orig\in\intr(S_i)$, so this really is a sublattice), for each $1\leq i\le t$. Define the map
\[
\varphi\colon N_1\oplus\cdots\oplus N_t\to N,\qquad(x_1,\ldots,x_t)\mapsto\sum_{i=1}^{t} x_i .
\]
Notice that $\varphi$ may not be surjective, however since its image has the same rank as $N$, the extension $\varphi_\R$ of $\varphi$ to a map of vector spaces is surjective. Moreover, $\varphi_\R$ gives the following representation of $P$:
\[
P=\varphi_\R(S_1\oplus\cdots\oplus S_t).
\]
Let $M,M_1,\ldots,M_t$ denote the lattices dual to $N,N_1,\ldots,N_t$, respectively. The map $\varphi_\R^*$ dual to $\varphi_\R$ is an injection, and in particular
\[
P^*\cong\varphi^*_\R(P^*)\subset(S_1\oplus\cdots\oplus S_t)^*=S_1^*\times\cdots\times S_t^*,
\]
where $M$ is naturally embedded via $\varphi^*$ into $M_1\oplus\cdots\oplus M_t$. This situation will be studied in more detail in~\S\ref{sec:slicing}. Using the monotonicity of the normalised volume, finding an upper bound for the normalised volume of $S_1^*\times\cdots\times S_t^*$ yields an upper bound for the normalised volume of $P^*$. Specifically, we know that:
\begin{align}\label{eq:geq3}
\begin{split}
\Vol_M(P^*)&\leq\Vol_{M_1\oplus\cdots\oplus M_t}(S_1^*\times\cdots\times S_t^*)\\
&=(d_1+\cdots+d_t)!\vol_{M_1\oplus\cdots\oplus M_t}(S_1^*\times\cdots\times S_t^*)\\
&=(d_1+\cdots+d_t)!\prod_{i=1}^t\vol_{M_i}(S_i^*)\\
&=\frac{(d_1+\cdots+d_t)!}{d_1!\cdots d_t!}\prod_{i=1}^t\Vol_{M_i}(S_i^*).
\end{split}
\end{align}
The normalised volume of $S_i^*$ is bounded from above (see~\cite{Kas10} and~\cite[Theorem~2.5(b)]{AKN14}):
\[
\Vol_{M_i}(S^*_i)\leq B_i,\qquad\text{with}\;
B_i:=\left\{
\begin{array}{ll}
  9 &\text{if}\;d_i=2,\\
  2(s_{d_i}-1)^2 &\text{if}\;d_i\neq 2.
\end{array}
\right.
\]
Hence inequality~\eqref{eq:geq3} becomes:
\[
\Vol_M(P^*)\leq\frac{(d_1+\cdots+d_t)!}{d_1!\cdots d_t!}\,\prod_{i=1}^t B_i.
\]
At this point, Theorem~\ref{thm:main} would follow from
\begin{equation}\label{eq:provet3}
\frac{(d_1+\cdots+d_t)!}{d_1!\cdots d_t!}\,\prod_{i=1}^t B_i< B_d.
\end{equation}
Unfortunately inequality~\eqref{eq:provet3} does not always hold: for example, it fails when $t=2$ and $d_1=d_2=d-1$, for any $d\geq 3$. Nevertheless, this technique is sufficient to prove Theorem~\ref{thm:main} for a large number of cases:

\begin{lem}\label{lem:provet3_cases}
Inequality~\eqref{eq:provet3} -- and therefore Theorem~\ref{thm:main} -- holds whenever:
\begin{enumerate}
\item\label{item:first_case}
$t\geq 3$, with the exception of the following six cases:
\begin{enumerate}
	\item $t=3$, $d=4$, and $d_1=d_2=d_3=2$; or
	\item $t=3$, $d=5$, and $d_1=d_2=d_3=3$; or
	\item $t=3$, $d=4$, $d_1=d_2=2$, and $d_3=1$; or
	\item $t=3$, $d=5$, $d_1=d_2=3$, and $d_3=2$; or
	\item $t=3$, $d=6$, and $d_1=d_2=d_3=4$; or
	\item $t=4$, $d=5$, and $d_1=d_2=d_3=d_4=2$;
\end{enumerate}
\item\label{item:second_case}
$t=2$, with the exceptions of the following three cases:
\begin{enumerate}
	\item $d_1=d_2=d-1$; or
	\item $d=4$, $d_1=3$, and $d_2=2$; or
	\item $d=5$, $d_1=4$, and $d_2=3$.
\end{enumerate}
\end{enumerate}
\end{lem}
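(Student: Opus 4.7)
The plan is to verify the purely numerical inequality~\eqref{eq:provet3} by case analysis on $t$ and the tuple $(d_1,\ldots,d_t)$, exploiting the doubly-exponential growth of the Sylvester sequence. The crucial identity is the recursion $s_{i+1}-1 = s_i(s_i-1)$, which gives, for any $1 \le a < d$,
\[
(s_d-1)^2 = (s_{a}-1)^2 \prod_{k=a}^{d-1} s_k^2,
\]
so $B_d = 2(s_d-1)^2$ dominates $2(s_a-1)^2$ by a factor of $\prod_{k=a}^{d-1} s_k^2$, which grows like a tower in $d-a$. Combined with the bound $B_i \le 2(s_{d_i}-1)^2$ when $d_i \ne 2$ (and $B_i = 9$ when $d_i = 2$, only mildly above $2(s_2-1)^2 = 8$), this Sylvester ratio is the engine of the proof.

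For $t = 2$, assume $d_1 \ge d_2$; by~\eqref{eq:di}, $d_1 \le d-1$. When $d_1 = d-1$, the right-hand side factorises as $B_d = s_{d-1}^2 \cdot B_{d-1}$, and~\eqref{eq:provet3} reduces to $\binom{d-1+d_2}{d_2}\,B_{d_2} < s_{d-1}^2$. For $d \ge 6$ this follows from the estimate $s_{d-1} > (s_{d-2}-1)^2 \ge (s_{d_2}-1)^2$ (itself a direct consequence of the recursion) together with a trivial bound on the binomial coefficient; for $d \in \{4,5\}$ a direct computation using $s_3 = 7$, $s_4 = 43$, $s_5 = 1807$ singles out exactly the exceptions~(ii)(a)--(ii)(c). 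When $d_1 \le d-2$, the ratio $(s_d-1)^2/(s_{d_1}-1)^2 \ge s_{d-1}^2 s_{d-2}^2$ absorbs every other factor, since $\binom{d_1+d_2}{d_2} \le 2^{d_1+d_2}$ and $B_{d_2} \le 2(s_{d-2}-1)^2$ (valid for $d \ge 5$; the case $d=4$ reduces to $(d_1,d_2)=(2,2)$, handled by direct numerical check).

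For $t \ge 3$ the constraint $d_i \le d-t+1$ from~\eqref{eq:di} forces every $d_i \le d-2$, so the ratio $B_d/B_{d_i}$ already contains at least two extra Sylvester factors $s_{d-1}^2 s_{d-2}^2$ for each $i$. For $t \ge 4$ one has $d_i \le d-3$ and the estimate $\prod_i B_i \le \bigl(2(s_{d-3}-1)^2\bigr)^t$ is crushed by $B_d$ via three applications of the squaring recursion, except in the borderline case $d = 5$, $t = 4$, which is examined by hand. For $t = 3$ the delicate tuples are those with the $d_i$ as large as possible; enumerating the finitely many admissible triples with $d \le 6$ yields precisely the six exceptions~(i)(a)--(i)(f), and all other tuples are dispatched by the same Sylvester-growth estimate.

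The main obstacle is bookkeeping rather than any deep idea: qualitatively the inequality is governed by an enormous Sylvester gap, but the boundary between cases where~\eqref{eq:provet3} holds and those where it fails sits in low dimensions ($d \le 6$), where the Sylvester ratios are only moderate and the multinomial contribution is not negligible. Exhaustively tabulating admissible $(t, d_1, \ldots, d_t)$ with $d$ small and evaluating the inequality numerically for each is finite but must be carried out with care in order to certify that the stated list of exceptions is both correct and complete.
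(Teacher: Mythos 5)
Your proposal is correct in substance and arrives at the same list of exceptions, but it runs on a different engine than the paper. The paper reduces to the extreme tuples $d_i=d-t+1$ by monotonicity, bounds the multinomial coefficient by $2^{n(n-1)/2}$, and converts everything into powers of the Aho--Sloane constant $c\approx 1.264$ via $s_n=\lfloor c^{2^n}+\tfrac12\rfloor$; this yields clean exponent comparisons but only for $d\geq 13$ (when $t\geq 3$) resp.\ $d\geq 10$ (when $t=2$), after which \emph{all} remaining tuples are checked directly. You instead telescope the recursion $s_{d}-1=s_{d-1}(s_{d-1}-1)$, so that $B_d=s_{d-1}^2B_{d-1}$ and more generally $B_d/B_{d_i}$ carries explicit factors $s_{d-1}^2s_{d-2}^2\cdots$; this gives sharper comparisons (e.g.\ the exact reduction of \eqref{eq:provet3} to $\binom{d-1+d_2}{d_2}B_{d_2}<s_{d-1}^2$ when $d_1=d-1$) and shrinks the finite verification to a much smaller range of $d$. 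That is a real gain in transparency, and your $t=2$ analysis is complete as sketched.

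Two cautions before you call it a proof. First, the phrase that the ratio $B_d/B_{d_i}$ contains two extra Sylvester factors ``for each $i$'' cannot be used multiplicatively across $i$: in \eqref{eq:provet3} the factor $B_d$ appears only once, so after absorbing $B_{d_1}$ you must still beat $\binom{d_1+\cdots+d_t}{d_1,\ldots,d_t}\prod_{i\geq 2}B_{d_i}$ with the leftover $s_{d-1}^2s_{d-2}^2$, and you implicitly need the monotonicity of the left-hand side in the $d_i$ (as the paper uses) to restrict to extreme tuples. Second, the margins near the boundary are genuinely thin, so ``dispatched/crushed by the Sylvester-growth estimate'' must be replaced by explicit numbers: for $t=3$, $d=7$, $(d_1,d_2,d_3)=(5,5,5)$ the inequality holds with only about an $8\%$ margin (and the natural estimate $\binom{15}{5,5,5}\cdot 4(s_5-1)^4<s_5^2s_6^2$ succeeds by roughly the same sliver), while for $t=4$, $d=6$, $(3,3,3,3)$ the margin is only about a factor of $2$; moreover tuples lying below a failing extreme tuple (e.g.\ $(4,4,3)$ at $d=6$, or the sub-tuples at $(t,d)=(4,5)$) are not covered by any of your estimates and must be enumerated individually, exactly as the paper does for all $d\leq 12$. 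Once those finite verifications are actually carried out, your argument is a valid, somewhat leaner alternative to the paper's proof.
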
 
\begin{proof}
We prove~\eqref{item:first_case} and~\eqref{item:second_case} separately, but by the same general technique: first we show that the statement is true for large values of $d$; then we check the finite number of remaining values.
\begin{enumerate}
\item[\eqref{item:first_case}]
Since the quantity
\[
\frac{(d_1 +\cdots + d_t)!}{d_1!\cdots d_t!}\,\prod_{i=1}^t B_i
\]
increases as the $d_i$ increase, by~\eqref{eq:di} it is enough to prove inequality~\eqref{eq:provet3} when $d_i=d-t+1$ for all $i$. That is, it is sufficient to show that
\begin{equation}\label{eq:naive}
\frac{(t(d-t+1))!}{(d-t+1)!^t}\, (B_{d-t+1})^{t} < B_d.
\end{equation}
From $n!\leq 2\cdot 2^2\cdots 2^{n-1} = 2^{n(n-1)/2}$ (which is strict when $n\ge 3$) we obtain:
\[
\frac{(t(d-t+1))!}{(d-t+1)!^t}\le (t(d-t+1))! < 2^{\frac{1}{2}t(d-t+1)(t(d-t+1)-1)}.
\]
Therefore, if the inequality
\begin{equation}\label{eq:toprove}
2^{\frac{1}{2}t(d-t+1)(t(d-t+1)-1)}2^t(B_{d-t+1})^{t}\leq B_d
\end{equation}
holds, so too does inequality~\eqref{eq:naive}.

To prove~\eqref{eq:toprove} we make use of the well-known description due to Aho--Sloane~\cite[Example~2.5]{AS73} and Vardi of the Sylvester sequence in terms of the constant $c\approx 1.2640847353\dots$:
\[
s_n=\left\lfloor c^{2^n}+\frac{1}{2}\right\rfloor.
\]
Notice that $B_d=2(s_d-1)^2>(s_d + 1)^2$ whenever $d\geq 3$. Since $s_d+1>c^{2^d}$, the right-hand side of~\eqref{eq:toprove} is bounded from below:
\[
B_d=2(s_d-1)^2 > (s_d + 1)^2 > c^{2^{d+1}}.
\]
Moreover $B_{d-t+1}/2 < c^{2^{d-t+2}}$. Since $c^3 > 2$, the left-hand side of~\eqref{eq:toprove} is bounded from above:
\[
2^{\frac{1}{2}t(d-t+1)(t(d-t+1)-1)} 2^t\left(\frac{B_{d-t+1}}{2}\right)^{t} <c^{\frac{3}{2}t(d-t+1)(t(d-t+1)-1)} c^{3t} c^{2^{d-t+2}t}.
\]
We shall show that $c^{\frac{3}{2}t(d-t+1)(t(d-t+1)-1)} c^{3t} c^{2^{d-t+2}t}\leq c^{2^{d+1}}$, from which we conclude that inequality~\eqref{eq:toprove} holds. Taking $\log_c$, we have to verify that the inequality
\begin{equation*}
\frac{3}{2}t(d-t+1)(t(d-t+1)-1) + 3t + 2^{d-t+2} t\leq 2^{d+1}
\end{equation*}
is satisfied. Rewrite this inequality as:
\begin{equation}\label{eq:exp}
3t(d-t+1)(t(d-t+1)-1) + 6t\leq 2^{d+2}\left(1-\frac{t}{2^{t-1}}\right).
\end{equation}
Since $t\ge 3$, by setting $t=3$ in the right-most factor it is enough to prove that:
\begin{equation*}
3t(d-t+1)(t(d-t+1)-1) + 6t\leq 2^{d}.
\end{equation*}
Since $t(d-t+1)$ is maximised when $t=(d+1)/2$, and since $6t\leq 6d$, the above inequality is valid when
\[
\frac{3(d+1)}{2}\left(d-\frac{d+1}{2}+1\right)\left(\frac{d+1}{2}\left(d-\frac{d+1}{2}+1\right)-1\right) + 6d\leq 2^{d}.
\]
This holds when $d\geq 13$. Recalling that $d$ bounds the quantities $t,d_1,\ldots,d_t$, we are left with finitely many cases to verify. Inequality~\eqref{eq:provet3} holds in all but six cases, as listed in the statement.

\item[\eqref{item:second_case}]
By the same monotonicity argument used at the beginning of the previous case, we choose $d_1$ and $d_2$ as great as possible, i.e.~we fix $d_1=d-1$ and $d_2=d-2$ (we noted above that inequality~\eqref{eq:provet3} is not satisfied when $d_1=d_2=d-1$). Inequality~\eqref{eq:provet3} becomes
\begin{equation}\label{eq:eq2}
\frac{(2d-3)!}{(d-2)! (d-1)!}\, B_{d-2} B_{d-1} < B_d.
\end{equation}
Proceeding as above, we reduce the problem to proving the inequality
\[
3(2d^2-7d+8)+2^{d-1}+2^d\le 2^{d+1}.
\]
This holds when $d\geq 10$. Removing the assumptions $d_1=d-1$ and $d_2=d-2$ on $d_1$ and $d_2$, the finitely many cases for $4\leq d\leq 9$ can be directly verified against inequality~\eqref{eq:provet3}. We find the exceptional cases listed in the statement of the Lemma.\qedhere
\end{enumerate}
\end{proof}

\begin{proof}[Proof of Corollary~\ref{cor:almostallcases}]
By Lemma~\ref{lem:provet3_cases} we need to show that proving Theorem~\ref{thm:main} for all deompositions listed in the statement of Corollary~\ref{cor:almostallcases} also proves it in the four cases:
\begin{enumerate}
\item $t=3$, $d=4$, $d_1=d_2=2$, and $d_3=1$; or
\item $t=3$, $d=5$, $d_1=d_2=3$, and $d_3=2$; or
\item $t=3$, $d=6$, and $d_1=d_2=d_3=4$; or
\item $t=4$, $d=5$, and $d_1=d_2=d_3=d_4=2$.
\end{enumerate}
In each cases we have that either $t=3$ or $t=4$. By Corollary~\ref{cor:Kasp} we can express $P$ as $P = P'\cup S_t$, where $P' = S_1\cup\ldots\cup S_{t-1}$ is a minimal polytope of dimension $d'$ decomposed into $t'=t-1$ minimal simplices. Note that in all four cases $d'=d-1$. We now proceed exactly as in the first part of this section. Let $N':=\lin_\R(P')\cap N$ be the sublattice of $N$ of lattice points in the linear hull of $P'$. We define the map $\varphi'\colon N'\oplus N_{t}\to N$ by $(x_1,x_2)\mapsto x_1 + x_2$, whose extension $\varphi_\R$ to a map of vector spaces is surjective and gives the following representation of $P$:
\[
P=\varphi_\R(P'\oplus S_{t}).
\]
Let $M'$ denote the lattice dual to $N'$. The map $(\varphi')_\R^*$ dual to $(\varphi')_\R$ is an injection, and in particular
\[
P^*\cong(\varphi')_\R^*(P^*)\subset(P'\oplus S_{t})^*=(P')^*\times S_{t}^*.
\]
As in~\eqref{eq:geq3}, by the monotonicity of the normalised volume,
\begin{align}\label{eq:geq3cor}
\begin{split}
\Vol_M(P^*)&\leq\frac{(d'+d_{t})!}{d'!d_{t}!}\Vol_{M'}((P')^*)\Vol_{M_{t}}(S_{t}^*).
\end{split}
\end{align}
By our assumption and Lemma~\ref{lem:provet3_cases}, Theorem~\ref{thm:main} holds for $t'=2$ and for $t'=3$, $d'=4$, $d_1=d_2=d_3=2$. Hence, in all four cases, Theorem~\ref{thm:main} holds for $P'$; i.e.~$\Vol_{M'}((P')^*) < B_{d-1}$. Since $\Vol_{M_{t}}(S_{t}^*)\leq B_{d_{t}}$ and $d'=d-1$,
\[
\Vol_M(P^*) <\frac{(d-1+d_{t})!}{(d-1)!d_{t}!} B_{d-1} B_{d_{t}}.
\]
Hence it is enough to prove that
\[
\frac{(d-1+d_{t})!}{(d-1)!d_{t}!} B_{d-1} B_{d_{t}} < B_d.
\]
This inequality can be directly checked in all four cases.
\end{proof}

\section{Slicing minimal polytopes}\label{sec:slicing}
We now develop the foundations for a finer technique that we use in~\S\ref{sec:final} to help prove the remaining cases of Theorem~\ref{thm:main}. In particular, we shall explain how minimal polytopes can be described as a particular union of slices which are products of slices of simplices (see Figure~\ref{fig:slice}). Using this construction, in~\S\ref{sec:integration} we give a better estimate of the dual volume via integration.

\subsection{Embedding the dual polytope}\label{subsec:slicing1}
As above, we are in the setup of Corollary~\ref{cor:Kasp}: $P\subset\NR$ is a $d$-dimensional  minimal canonical Fano polytope decomposed into  minimal canonical Fano simplices $S_1,\ldots,S_t$, for some $t\geq 2$. We define
\[
\V:=\!\!\bigcup_{1\leq i_1 < i_2\leq t}\!\!\ve(S_{i_1})\cap\ve(S_{i_2})
\]
to be the set of those vertices of $P$ which occur multiple times amongst the vertices of the $S_i$, and define $\V_i:=\mathcal{V}\cap\ve(S_{i})$. For example, in Figure~\ref{fig:1} we have $\V=\V_1=\V_2=\{v\}$. 

\begin{figure}[tb]
\centering
\begin{minipage}[t]{0.3\textwidth}
\begin{tikzpicture}[scale=0.6]
\draw[dashed] (-1,0)--(2,1)--(1.7,2.8);
\draw[dashed] (2,1)--(5,1);
\draw[fill,black!20] (0.25,1.3)--(2.36,1.3)--(3.94,1.86)--(1.86,1.86);
\draw[thick,black] (1.7,2.8)--(2.8,2.8);
\draw[thick,black] (-1,0)--(2,0);
\draw[thick,black] (2,0)--(5,1)--(2.8,2.8)--cycle;
\draw[thick,black] (1.7,2.8)--(-1,0);
\draw[fill=black] (2.15,1.6) circle (0.2em) node[above]{$\orig$};
\end{tikzpicture}
\end{minipage}
\begin{minipage}[t]{0.25\textwidth}
\begin{tikzpicture}[scale=0.6]
\draw[very thick,black!50] (1.07,1.58)--(3.17,1.58);
\draw[thick,black!30] (3.5,0.5)--(2.4,4.1)--(0.5,0.5)--cycle;
\draw[very thick,black!50] (3.5,0.5)--(0.5,0.5);
\draw[fill=black] (2.15,1.6) circle (0.2em) node[above]{$\orig$};
\draw (4.1,0.8) node[below]{{\tiny $F^*_1$}};
\draw (3,1.6) node[right]{{\tiny $H_{1,\orig}$}};
\end{tikzpicture}
\end{minipage}
\begin{minipage}[t]{0.25\textwidth}
\begin{tikzpicture}[scale=0.6]
\draw[very thick,black!50] (1.30,1.3)--(2.88,1.86);
\draw[thick,black!30] (3.5,1)--(2.25,2.8)--(0.5,0)--cycle;
\draw[very thick,black!50] (3.5,1)--(0.5,0);
\draw[fill=black] (2.15,1.6) circle (0.2em) node[above]{$\orig$};
\draw (1.2,0.7) node[right]{{\tiny $F^*_2$}};
\draw (0.6,1.6) node[right]{{\tiny $H_{2,\orig}$}};
\end{tikzpicture}
\end{minipage}
\caption{The dual $P^*$ of the polytope $P$ from Figure~\ref{fig:1}, together with the dual triangles $(S'_1)^*\subset(M'_1)_\R$ and $(S'_2)^*\subset(M'_2)_\R$. In the first picture, the grey slice is $H_{1,\orig}\times H_{2,\orig}$. We refer to~\S\ref{subsec:slicing2} for the precise definitions.\label{fig:slice}}
\end{figure}
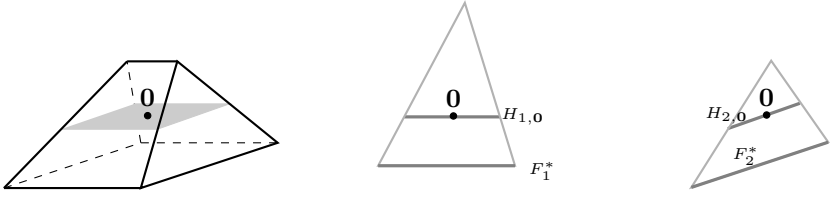

It will be convenient to coarsen the lattice $N$. We note that coarsening the ambient lattice $N$ to a lattice $N'$ is an assumption we can make. Indeed, if $P_M^*$ and $P_{M'}^*$ denote the duals of $P$ with respect to the lattices $M=N^*$ and $M'=(N')^*$, respectively, then the volume of $P_{M'}^*$ is equal to the volume of $P_M^*$ multiplied by the index of $M'$ as a subgroup of $M$ (which is a positive integer). 

Let $N'_i$ denote some sublattice of $N_i=\lin_{\R}(S_i)\cap N$ of same rank $d_i$ with $\V_i\subset N'_i$ (a specific choice of $N'_i$ will be given in~\S\ref{subsec:slicing2}). Notice that $S_i$ may no longer be a \emph{lattice} simplex with respect to $N'_i$. Therefore, in order to avoid any confusion, we denote by $S'_i\subseteq (N'_i)_\R=(N_i)_\R$ the \emph{rational} simplex with vertices $\ve(S_i)$ with respect to the lattice $N'_i$. Now, by possibly coarsening the lattice $N$ we may suppose that $N$ is the image of the lattice $N'_1\oplus\cdots\oplus N'_t$ via the map
\begin{equation}\label{phimap}
\begin{array}{r@{\ }c@{\ }l}
\varphi\colon N'_1\oplus\cdots\oplus N'_t &\to&N\\
(x_1,\ldots,x_t) &\mapsto&{\displaystyle\sum_{i=1}^{t}x_i}.
\end{array}
\end{equation}
Hence we can assume that this map is surjective. Notice that the polytope $P$ may no longer be a lattice polytope with respect to this ambient lattice. We extend the map $\varphi$ to the map of real vector spaces $\varphi_\R\colon(N'_1)_\R\oplus\cdots\oplus(N'_t)_\R\to\NR$. As in the previous section we can describe $P$ as
\[
P =\varphi_\R (S'_1\oplus\cdots\oplus S'_t).
\]
By definition $\varphi$ is a surjective map, so we have the exact sequence
\[
0\to\ker\varphi\hookrightarrow N'_1\oplus\cdots\oplus N'_t\twoheadrightarrow N\to 0,
\]
which splits over $\Z$. From~\eqref{eq:dr} we have that $N'_1\oplus\cdots\oplus N'_t$ splits in parts of rank $d$ and $r$. As a consequence, the dual sequence
\[
0\to M\hookrightarrow M'_1\oplus\cdots\oplus M'_t\twoheadrightarrow (\ker\varphi)^*\to 0
\]
is exact and splits too. Here we used the notation $M'_1,\ldots,M'_t$ for the dual lattices of $N'_1,\ldots,N'_t$, respectively. Let $(\ker\varphi)^\perp$ denote the elements of $M'_1\oplus\cdots\oplus M'_t$ vanishing on $\ker\varphi$. By the exactness of the dual sequence, $\varphi^*(M)=(\ker\varphi)^\perp$; that is, the lattices $M$ and $(\ker\varphi)^\perp$ are isomorphic via $\varphi^*$. In particular, $(\ker\varphi)^\perp = (M'_1\oplus\cdots\oplus M'_t)\cap (\ker\varphi)^\perp_\R$ is a direct summand of $M'_1\oplus\cdots\oplus M'_t$ of rank $d$.

By tensoring by $\R$ to extend the maps to the ambient real vector spaces, it follows that the following polytopes are isomorphic as rational polytopes with respect to their respective lattices:
\begin{align}\label{eq:P*}
\begin{split}
P^* &\cong\varphi_\R ^* (P^*)\\
&=(S'_1\oplus\cdots\oplus S'_t)^*\cap (\ker\varphi)_\R^\perp\\
&=((S'_1)^*\times\cdots\times (S'_t)^*)\cap (\ker\varphi)_\R^\perp.	
\end{split}
\end{align}

We now describe a set of generators of $(\ker\varphi)_\R$. For this, let us identify $N'_i$ with the corresponding direct summand in $N'_1\oplus\cdots\oplus N'_t$. In this way, we can identify $v\in\ve(S'_i)$ with $e_{i,v}\in N'_1\oplus\cdots\oplus N'_t$, i.e., $(e_{i,v})_i = v\in N'_i$ and $(e_{i,v})_j =\orig_{N'_j}$ for $j\not=i$. Recall that $\dim_\R (\ker\varphi)_\R = r$. Let $1\leq i_1 < i_2\leq t$, and $v\in\V_{i_1}\cap\V_{i_2}$. We denote by $w_{v,i_1,i_2}$ the element $e_{i_2,v}-e_{i_1,v}\in N'_1\oplus\cdots\oplus N'_t$.

\begin{lem}\label{lem:generators}
With notation as above, $\ker\varphi_\R$ is generated by the set
\[
\Omega :=\{w_{v,i_1,i_2}\in N'_1\oplus\cdots\oplus N'_t\st 1\leq i_1 < i_2\leq t,\, v\in\V_{i_1}\cap\V_{i_2}\}.
\]
\end{lem}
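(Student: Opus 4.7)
The inclusion $\operatorname{span}(\Omega) \subseteq \ker\varphi_\R$ is immediate, since each generator $w_{v,i_1,i_2} = e_{i_2,v}-e_{i_1,v}$ is sent by $\varphi$ to $v - v = 0$. Recalling that $\dim\ker\varphi_\R = r$ (from the rank computation using~\eqref{eq:dr}), the plan is to prove the reverse inclusion by a dimension count on the quotient
\[
Q := (N'_1 \oplus \cdots \oplus N'_t)_\R / \operatorname{span}(\Omega),
\]
showing $\dim Q = d$. This forces the induced surjection $\bar\varphi \colon Q \to N_\R$ to be an isomorphism and hence $\operatorname{span}(\Omega) = \ker\varphi_\R$.

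To compute $\dim Q$, I would first present $(N'_1)_\R\oplus\cdots\oplus(N'_t)_\R$ by the generators $e_{i,v}$ for $i\in\{1,\ldots,t\}$ and $v\in\ve(S_i)$, subject to the $t$ relations
\[
\sum_{v\in\ve(S_i)} \lambda^{(i)}_v\, e_{i,v} = 0 \qquad (i=1,\ldots,t),
\]
where $\lambda^{(i)}_v > 0$ are the barycentric coordinates of $\orig\in\intr(S_i)$. Taking the quotient by $\operatorname{span}(\Omega)$ identifies $e_{i_1,v}$ with $e_{i_2,v}$ for each $v\in\V$ and $i_1,i_2\in I(v) := \{i : v\in\ve(S_i)\}$, so $Q$ is generated by classes $\bar e_v$ indexed by $v\in\ve(P)$, subject to the $t$ relations $R_i := \sum_{v\in\ve(S_i)} \lambda^{(i)}_v \bar e_v = 0$. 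Since $|\ve(P)| = d+t$ by~\eqref{vertices}, the goal $\dim Q = d$ reduces to showing that $R_1,\ldots,R_t$ are linearly independent in $\R^{\ve(P)}$.

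I would establish this linear independence by induction on $t$, with the trivial base case $t=1$ (where $P = S_1$ is a simplex and $R_1 \neq 0$). For the inductive step with $t\geq 2$, write $P = \conv(P^{(t-1)} \cup S_t)$ as in Proposition~\ref{prop:Kasp}; note that $P^{(t-1)}$ is itself a minimal canonical Fano polytope admitting the decomposition $S_1,\ldots,S_{t-1}$ and satisfying $\ve(S_i)\subset\ve(P^{(t-1)})$ for each $i<t$. The strict inequality $r_t < d_t$ from~\eqref{eq:di} guarantees that $S_t$ has at least $d_t + 1 - r_t \geq 2$ vertices outside $\ve(P^{(t-1)})$; fixing any such $v^\ast$, the class $\bar e_{v^\ast}$ occurs in $R_t$ with coefficient $\lambda^{(t)}_{v^\ast} > 0$ but with coefficient $0$ in every $R_i$ with $i<t$. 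Hence any relation $\sum_i \mu_i R_i = 0$ forces $\mu_t = 0$, and the inductive hypothesis applied to $P^{(t-1)}$ completes the argument.

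The main obstacle I anticipate is precisely this linear independence step. A naive attempt to show that each $S_i$ contributes a vertex unshared with all the other $S_j$ will fail, because configurations can arise in which some simplex has \emph{every} one of its vertices appearing elsewhere in the decomposition (i.e.\ $\V_i = \ve(S_i)$). What rescues the proof is the \emph{ordered} nature of the decomposition furnished by Proposition~\ref{prop:Kasp} together with the strict bound $r_i < d_i$, which ensures that at each peeling step the newly stripped simplex contributes genuinely fresh vertices, allowing the induction to advance.
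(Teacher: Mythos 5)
Your argument is correct, but it runs in the opposite direction to the paper's. The paper works inside $\ker\varphi_\R$: it singles out the subset $\Omega'\subseteq\Omega$ of those $w_{v,i_1,i_2}$ with $i_1$ maximal below $i_2$, notes that $\abs{\Omega'}=r=\dim\ker\varphi_\R$, and proves linear independence of $\Omega'$ by looking at the largest second index occurring in a hypothetical relation: such a relation would force a nontrivial dependence among a subset of $\ve(S_i)$, hence $\V_i\cap\ve(P^{(i-1)})=\ve(S_i)$ and $r_i=d_i+1$, contradicting~\eqref{eq:di}. You instead bound the codimension of $\operatorname{span}(\Omega)$: presenting $(N'_1)_\R\oplus\cdots\oplus(N'_t)_\R$ by generators indexed by the vertices of the $S_i$ modulo the $t$ barycentric relations, the quotient by $\operatorname{span}(\Omega)$ becomes $\R^{\ve(P)}$ modulo the images of these relations (using $\ve(P)=\bigcup_i\ve(S_i)$ and~\eqref{vertices}), and their independence follows because, by $r_i<d_i$, each $S_i$ owns a vertex not lying in $P^{(i-1)}$. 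Both proofs rest on the same ingredients -- the counts~\eqref{eq:dr} and~\eqref{vertices}, the bound~\eqref{eq:di}, and the fact that $\orig\in\intr(S_i)$ makes the barycentric relation the \emph{unique} relation among $\ve(S_i)$ (equivalently, any proper subset of $\ve(S_i)$ is linearly independent; note you need this fact too, to know that your presentation of $(N'_i)_\R$ has exactly a one-dimensional kernel) -- so neither is more elementary; the paper's version has the merit of exhibiting an explicit basis of the kernel inside $\Omega$, while yours matches the presentation of $N$ as a quotient of $\bigoplus_i\Z^{d_i+1}$ by weight and gluing relations that the paper exploits later in~\S\ref{sec:final}. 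Two small points you should spell out: first, $\ve(S_i)\subseteq\ve(P^{(t-1)})$ for $i<t$ (each $v\in\ve(S_i)$ is a vertex of $P$ contained in the subpolytope $P^{(t-1)}$, hence a vertex of it), so your chosen $v^{\ast}$ really has coefficient zero in $R_1,\ldots,R_{t-1}$; second, your induction does not actually need $P^{(t-1)}$ to be minimal or canonical -- only the data $r_i<d_i$, $\ve(S_i)\subset\ve(P)$ and $\orig\in\intr(S_i)$ recorded in Corollary~\ref{cor:Kasp} -- though minimality of $P^{(t-1)}$ does hold by the iterative construction there.
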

\begin{proof}
We prove that the subset
\[
\Omega':=\left\{w_{v,i_1,i_2}\in\Omega\st i_1=\max\{i\st v\in\V_i, i<i_2\}\right\}
\]
of $\Omega$ is a basis of $\ker\varphi_\R$. Since for $2\le i\leq t$ we have $\abs{\{w_{v,i_1,i}\in\Omega'\st i_2=i\}}=r_i$, this implies that $\abs{\Omega'}=\sum_{i=2}^t r_i = r$. Hence it is enough to prove that the elements of $\Omega'$ are linearly independent.

Denote the elements of $\Omega'$ by $\bx_1,\ldots,\bx_r$, where $\bx_j=((\bx_j)_1,\ldots,(\bx_j)_t)\in N'_1\oplus\cdots\oplus N'_t$. Assume there exists a nontrivial relation $\mu_1\bx_1 +\ldots +\mu_r\bx_r =\orig$ with $\bmu := (\mu_1,\ldots,\mu_r)\in\R^r\setminus\{(0,\ldots,0)\}$. Let us define
\[
\supp(\bmu) :=\{j\in\{1,\ldots, r\}\st\mu_j\neq 0\}.
\]
Let $i\in\{1,\ldots, t\}$ be the largest such integer such that there exists an integer $j\in\supp(\bmu)$, an index $1\le i_1 < i$, and a vertex $v\in\V_{i_1}\cap\V_i$, with $w_{v,i_1,i}=\bx_j$. By definition of $i$ and $\Omega'$, all elements in $\{(\bx_j)_i\st j\in\supp(\bmu), (\bx_j)_i\not=\orig_{N'_i}\}\neq\emptyset$ are pairwise different vertices in $\V_i\cap\ve(P^{(i-1)})$. Hence
\[
\sum_{j\in\supp(\bmu)}\mu_j\, (\bx_j)_i =\orig_{N'_i}
\]
implies a nontrivial relation of a non-empty subset of the vertices in $\V_i\cap\ve(P^{(i-1)})$. However, as $S_i$ contains the origin in its interior, any proper subset of the set of vertices of $S_i$ is linearly independent, so $\V_i\cap\ve(P^{(i-1)}) =\ve(S_i)$. Hence, $r_i=d_i+1$, a contradiction to~\eqref{eq:di}.
\end{proof}
\noindent
We now apply Lemma~\ref{lem:generators} to~\eqref{eq:P*}:
\begin{align}
\label{eq:slicing1}
\begin{split}
P^*&\cong\varphi^*_\R (P^*)\\
&=((S'_1)^*\times\cdots\times (S'_t)^*)\cap (\ker\varphi)_\R^\perp\\
&=\{(y_1,\ldots , y_t)\in (S'_1)^*\times\cdots\times (S'_t)^*\st\langle (y_1,\ldots,y_t) ,\omega\rangle = 0\text{ for each }\omega\in (\ker\varphi)_\R\}\\
&=\{(y_1,\ldots , y_t)\in (S'_1)^*\times\cdots\times (S'_t)^*\st\langle y_{i_1} , e_{i_1,v}\rangle =\langle y_{i_2} , e_{i_2,v}\rangle\text{ for each } w_{v,i_1,i_2}\in\Omega\}\\
&=\{(y_1,\ldots , y_t)\in (S'_1)^*\times\cdots\times (S'_t)^*\st\langle y_{i_1} , e_{i_1,v}\rangle =\langle y_{i_2} , e_{i_2,v}\rangle\text{ for each } v\in\V_{i_1}\cap\V_{i_2}\}.\\
\end{split}
\end{align}

\subsection{The integration map}\label{subsec:slicing2}
From here onwards we will assume that the decomposition of $P$ into the simplices $S_i$ is \emph{irredundant}, i.e.~$\V_i\subsetneq\ve(S_i)$ for $i=1,\ldots, t$. Under this assumption, we describe a specific choice for $N'_i$. For this, we choose a vertex $v_i\in\ve(S_i)\setminus\V_i$, and set 
\[
\hV_i:=\ve(S_i)\setminus\{v_i\}.
\]
We have $\V_i\subset\hV_i$. We define $N'_i$ to be the lattice spanned by $\hV_i$, that is,
\[
N'_i:=\langle v\in\hV_i\rangle_\Z.
\]
By construction, the $d_i$ vertices in $\hV_i$ form a lattice basis
\[
\{e_{i,v}\}_{v\in\hV_i}
\]
of $N'_i$ (as a sublattice of $N'_1\oplus\cdots\oplus N'_t$). 
Note that the vertex $v_i$ need not be a lattice point in $N'_i$. We again assume that $N$ is given as the image of $\varphi$, see~\eqref{phimap}, and we will refer to $S_i$ as $S'_i$ when referring to it with respect to the lattice $N'_i$. This choice of lattice will allow us to prove Lemma~\ref{lem:psisurj} which simplifies the considerations in~\S\ref{sec:integration}. In particular, it will yield a convenient explicit description of $(S'_i)^*$ (see Lemma~\ref{lem:S*}).

Set $q :=\abs{\V}$ and $q_i :=\abs{\V_i}$ for $i=1,\ldots, t$. We define $\Psi$ to be the map
\begin{align*}
\Psi: (\ker\varphi)^\perp &\to\bigoplus_{v\in\V}\Z\cong\Z^q\\
 (y_1,\ldots,y_t) &\mapsto (\langle y_{i_v} , e_{i_v,v}\rangle)_{v\in\V},
\end{align*}
where, for each $v$, $i_v$ is any index such that $v\in\V_{i_v}$. Since $\langle y_{i_1} , e_{i_1,v}\rangle=\langle y_{i_2} , e_{i_2,v}\rangle$ whenever $v\in\V_{i_1}\cap\V_{i_2}$, $\Psi$ is a well defined map. In an analogous fashion to the definition of $\Psi$, for each $i\in\{1,\ldots,t\}$ we define the map
\begin{align*}
\Psi_i: M'_i &\to\bigoplus_{v\in\V_i}\Z\cong\Z^{q_i}\\
 y &\mapsto (\langle y , e_{i,v}\rangle)_{v\in\V_i}.
\end{align*}

\begin{lem}\label{lem:psisurj}
The maps $\Psi,\Psi_1,\ldots,\Psi_t$ are surjective.
\end{lem}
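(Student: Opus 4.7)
The plan is to treat $\Psi_i$ and $\Psi$ in parallel, reducing each to the elementary observation that restricting a free abelian group to a subset of a dual basis yields a surjection onto a coordinate subspace. The surjectivity of $\Psi_i$ is immediate: since $\{e_{i,v}\}_{v\in\hV_i}$ is by construction a $\Z$-basis of $N'_i$ and $\V_i\subseteq\hV_i$, the dual basis element of $M'_i$ corresponding to $v\in\V_i$ is sent by $\Psi_i$ to the standard basis vector of $\bigoplus_{w\in\V_i}\Z$ indexed by $v$.

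For $\Psi$, the first step is to transport the problem to $M$ via the isomorphism $\varphi^*\colon M\xrightarrow{\sim}(\ker\varphi)^\perp$ established in \S\ref{subsec:slicing1}. A direct computation shows that $\Psi(\varphi^*(u))=(\langle u,v\rangle)_{v\in\V}$ for all $u\in M$: indeed, if $\varphi^*(u)=(y_1,\ldots,y_t)$ and $v\in\V_{i_v}$, then $\langle y_{i_v},e_{i_v,v}\rangle=\langle u,\varphi(e_{i_v,v})\rangle=\langle u,v\rangle$. Thus surjectivity of $\Psi$ is equivalent to surjectivity of the evaluation map $M\to\Z^q$, $u\mapsto(\langle u,v\rangle)_{v\in\V}$.

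The heart of the argument is then the claim that $\bigcup_{i=1}^t\hV_i$, viewed as a subset of $N$, is a $\Z$-basis of $N$. Granting this, the inclusion $\V\subseteq\bigcup_i\hV_i$ (which follows from $\V_i\subseteq\hV_i$) makes the evaluation map above a restriction of coordinates to a subset of a basis, hence surjective. To establish the claim I would argue in two steps. First, $\bigcup_i\hV_i$ generates $N$: the assumed surjectivity of $\varphi$ in~\eqref{phimap}, together with the fact that $\bigsqcup_i\{e_{i,v}\}_{v\in\hV_i}$ is a $\Z$-basis of $N'_1\oplus\cdots\oplus N'_t$, implies that the images $\varphi(e_{i,v})=v$ generate $N$, and these images run through $\bigcup_i\hV_i$. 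Second, $\abs{\bigcup_i\hV_i}=d$: the choice $v_i\in\ve(S_i)\setminus\V_i$ forces $v_i$ to lie only in $\ve(S_i)$ (otherwise $v_i\in\V\cap\ve(S_i)=\V_i$), so the $v_i$ are pairwise distinct, and $\bigcup_i\hV_i=\ve(P)\setminus\{v_1,\ldots,v_t\}$ has $(d+t)-t=d$ elements by~\eqref{vertices}. Since $N\cong\Z^d$ is free of rank $d$, any generating set of size $d$ is automatically a $\Z$-basis.

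I expect the only obstacle to be the bookkeeping: identifying $\Psi\circ\varphi^*$ with evaluation at $\V$, and applying the irredundancy assumption together with~\eqref{vertices} to count $\abs{\bigcup_i\hV_i}$. Once these are in place, the result follows from a standard fact about the pairing of dual lattice bases.
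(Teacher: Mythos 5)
Your proof is correct, and for the map $\Psi$ it takes a slightly different route from the paper. The paper argues directly in $(\ker\varphi)^\perp$: for each $v\in\V$ it exhibits the explicit element $\sum_{j\,:\,v\in\V_j}e_{j,v}^*$, which is sent by $\Psi$ to the corresponding standard basis vector (the membership of this element in $(\ker\varphi)^\perp$ is left to the reader, and is most naturally checked against the generators from Lemma~\ref{lem:generators}). You instead pull everything back along the isomorphism $\varphi^*\colon M\xrightarrow{\ \sim\ }(\ker\varphi)^\perp$, identify $\Psi\circ\varphi^*$ with evaluation at the points of $\V$, and then prove the key structural fact that $\bigcup_i\hV_i=\ve(P)\setminus\{v_1,\ldots,v_t\}$ is a $\Z$-basis of the coarsened lattice $N$ (surjectivity of $\varphi$ gives generation, the count~\eqref{vertices} together with the pairwise distinctness of the $v_i\notin\V$ gives cardinality $d$, and a generating set of size $d$ in $\Z^d$ is a basis); surjectivity then follows because $\V$ sits inside this basis. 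The two arguments are close in substance -- indeed $\varphi^*$ applied to the dual basis element of $v\in\V$ is exactly the paper's witness $\sum_{j\,:\,v\in\V_j}e_{j,v}^*$ -- but your version makes the orthogonality to $\ker\varphi$ automatic (and so does not need Lemma~\ref{lem:generators} at this point), at the cost of the counting argument via~\eqref{vertices} and the irredundancy assumption of~\S\ref{subsec:slicing2}, which you do invoke correctly; it also isolates the mildly useful observation that the chosen non-excluded vertices form a lattice basis of $N$. The treatment of the maps $\Psi_i$ is identical to the paper's.
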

\begin{proof}
Let $\{\epsilon_{i,v}\}_{v\in\V_i}$ be the standard basis of $\bigoplus_{v\in\V_i}\Z$, and $\{e_{i,v}^*\}_{v\in\hV_i}$ the lattice basis of $M'_i$ dual to the lattice basis $\{e_{i,v}\}_{v\in\hV_i}$ of $N'_i$. The maps $\Psi_i$ are surjective, since each element $e_{i,v}^*$ is mapped into $\epsilon_{i,v}$, for $v\in\V_i$.

We now prove that $\Psi$ is surjective. Since the codomains of the maps $\Psi_i$ span the codomain of $\Psi$, it is enough to check that for each $i\in\{1,\ldots,t\}$ and for each $v\in\V_i$, there exists an element $(y_1,\ldots,y_t)\in (\ker\varphi)^\perp\subset M'_1\oplus\cdots\oplus M'_t$, such that $y_i=e_{i,v}^*$. This is true, since it suffices to choose $(y_1,\ldots,y_t)$ as
\[\sum_{j\text{ such that }v\in V_j}\!\!e_{j,v}^*\in M'_1\oplus\cdots\oplus M'_t.\qedhere
\]
\end{proof}
\noindent
As a consequence of Lemma~\ref{lem:psisurj} the extensions of $\Psi,\Psi_1,\ldots,\Psi_t$ to the real vector space maps
\[
\Psi_\R,(\Psi_1)_\R,\ldots,(\Psi_t)_\R
\]
are linear surjective maps. We define natural projections
\[
p_i:\bigoplus_{v\in\V}\R\to\bigoplus_{v\in\V_i}\R
\]
as the identity over $\bigoplus_{v\in\V_i}\R$ and the zero map over $\bigoplus_{v\in\V\setminus\V_i}\R$. 

Let $\mathcal{D}$ be the set of parameters
\[
\mathcal{D}:=\Psi_\R(\varphi^*_\R (P^*))\subset\bigoplus_{v\in\V}\R.
\]
Given a point $\blambda=(\lambda_v)_{v\in\V}\in\mathcal{D}$, define the fibre
\[
H_{i,\blambda}	:= (\Psi_i)_\R^{-1}(p_i(\blambda))\cap (S'_i)^* =\{y\in (S'_i)^*\st\langle y , v\rangle =\lambda_v\text{ for all }v\in\V_i\}\subset (M'_i)_\R.
\]
Denote by $F^*_i$ the $(d_i-q_i)$-dimensional face of $(S'_i)^*$ given by
\begin{equation}
\label{eq:Fi}
F^*_i:=H_{i,(-1,\ldots,-1)}.
\end{equation}
From~\eqref{eq:slicing1} we obtain the desired decomposition of $P^*$:
\begin{align}
\label{eq:slicing2}
\begin{split}
P^* &\cong\bigsqcup_{(\lambda_v)_{v\in\V}\in\mathcal{D}}\{(y_1,\ldots , y_t)\in (S'_1)^*\times\cdots\times (S'_t)^*\st\langle y_i , e_{i,v}\rangle =\lambda_v\text{ for all } v\in\V_i,\, i = 1,\ldots, t\}\\
&=\bigsqcup_{\blambda\in\mathcal{D}} H_{1,\blambda}\times\cdots\times H_{t,\blambda}\\
\end{split}
\end{align}
In other words, $P^*$ is sliced into a disjoint union of sections (see Figure~\ref{fig:slice}).

\section{Bounding the volume of $P^*$ via integration}\label{sec:integration}
In this section we apply~\eqref{eq:slicing2} to obtain a finer bound on the volume of $P^*$ in the case when $P$ decomposes into just two simplices. From here onwards we assume we are in the setup of Corollary~\ref{cor:Kasp} with $t=2$,~i.e.\ $P$ decomposes in two minimal canonical simplices $S_1$ and $S_2$ of dimensions $d_1$ and $d_2$ respectively. As $P$ is not a simplex, clearly this decomposition is irredundant, so the results of~\S\ref{subsec:slicing2} apply. We will continue to use the notation introduced in~\S\ref{sec:slicing}, and in particular the choice of $N'_i,N,S'_i$ in~\S\ref{subsec:slicing2}. Note that $q=r_2=r = |\V| = |\V_1| = |\V_2|$ is the number of common vertices of $S_1$ and $S_2$. 

Equality~\eqref{eq:slicing2} and Lemma~\ref{lem:psisurj} allow us to calculate the volume $\vol_M(P^*)$ by integrating the sections over the possible values of $\blambda$. In particular:
\begin{equation}\label{eq:int1}
\vol_M(P^*) =\int_{\blambda\in\mathcal{D}}\vol_{M'_1}(H_{1,\blambda})\vol_{M'_2}(H_{2,\blambda})\,d\blambda.
\end{equation}
Before attempting to bound such a value, we present an alternative description of $\mathcal{D}$. For $i=1,2$, we define $\mathcal{D}_i$ as
\[
\mathcal{D}_i:=(\Psi_i)_\R ((S'_i)^*),
\]
and we note that (since the maps $p_i$ defined in the previous section correspond to the identity maps here),
\begin{equation}
\label{eq:lambda}
\mathcal{D}=\mathcal{D}_1\cap\mathcal{D}_2.
\end{equation}

Recall that a lattice basis $\{e_{i,v}\}$ for $N'_i$ is given by all the elements of $\hV_i=\ve(S_i)\setminus\{v_i\}$. Denote by $(\beta_{i,v})_{v\in\ve(S_i)}$ the barycentric coordinates of the origin in the simplex $S_i$,~i.e.\ $\sum_{v\in\ve(S_i)}\beta_{i,v} v =\orig$, where $\sum_{v\in\ve(S_i)}\beta_{i,v} 
= 1$. Note that $\beta_{i,v} > 0$ for any $v\in\ve(S_i)$. Hence, we can express $v_i$ as 
\[v_i= -\sum_{v\in\hV_i}\frac{\beta_{i,v}}{\beta_{i,v_i}}e_{i,v}.\]

Let us denote by $\{\epsilon_{i,v}\}_{v\in\V_i}$ the standard basis of $\bigoplus_{v\in\V_i}\Z$. 
Lemma~\ref{lem:S*} below gives an explicit description for $(S'_i)^*$ and $\mathcal{D}_i$ in terms of our chosen lattice bases. We omit the straightforward proof. 

\begin{lem}\label{lem:S*}
With notation as above, for $i=1,2$
\[
(S'_i)^* =\conv\left(\left\{-\sum_{v\in\V} e^*_{i,v}\right\}\cup\left\{\left(\frac{1}{\beta_{i,w}}-1\right)e^*_{i,w}-\sum_{v\in\V\setminus\{w\}} e^*_{i,v}\right\}_{w\in\hV_i}\right),
\]
\[
\mathcal{D}_i =\conv\left(\left\{-\sum_{v\in\V}\epsilon^*_{i,v}\right\}\cup\left\{\left(\frac{1}{\beta_{i,w}}-1\right)\epsilon^*_{i,w}-\sum_{v\in\V\setminus\{w\}}\epsilon^*_{i,v}\right\}_{w\in\V}\right).
\]
\end{lem}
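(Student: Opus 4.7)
The plan is to compute the vertices of $(S'_i)^*$ directly from polar duality for simplices, then obtain $\mathcal{D}_i$ by pushing that description through the linear map $(\Psi_i)_\R$. The simplex $S'_i$ has vertex set $\ve(S_i)=\hV_i\cup\{v_i\}$ and contains $\orig$ in its interior, so its polar $(S'_i)^*$ is again a $d_i$-simplex, and its $d_i+1$ vertices $\{u_w\}_{w\in\ve(S_i)}$ are characterised by the facet--vertex duality $\langle u_w,v\rangle=-1$ for all $v\in\ve(S_i)\setminus\{w\}$.

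Reading these conditions off in the dual basis $\{e^*_{i,v}\}_{v\in\hV_i}$ of $M'_i$: when $w=v_i$, the system $\langle u_{v_i},e_{i,v}\rangle=-1$ for $v\in\hV_i$ is componentwise $(u_{v_i})_v=-1$, so $u_{v_i}=-\sum_{v\in\hV_i}e^*_{i,v}$. When $w\in\hV_i$, the relations $\langle u_w,e_{i,v}\rangle=-1$ for $v\in\hV_i\setminus\{w\}$ force $(u_w)_v=-1$ off $w$, and the one remaining condition $\langle u_w,v_i\rangle=-1$ — combined with the barycentric identity $v_i=-\sum_{v\in\hV_i}(\beta_{i,v}/\beta_{i,v_i})e_{i,v}$ and the relation $\sum_{v\in\ve(S_i)}\beta_{i,v}=1$ — solves to $(u_w)_w=1/\beta_{i,w}-1$. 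These are the two families of vertices appearing in the first equation.

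For the second equation, I use that the linear image of a polytope is the convex hull of the images of its vertices, so it suffices to apply $(\Psi_i)_\R$ to each $u_w$. In coordinates $(\Psi_i)_\R$ acts by $e^*_{i,v}\mapsto\epsilon^*_{i,v}$ if $v\in\V_i$ and $e^*_{i,v}\mapsto\orig$ if $v\in\hV_i\setminus\V_i$. Hence each $u_w$ with $w\in\V_i=\V$ pushes forward to $(1/\beta_{i,w}-1)\epsilon^*_{i,w}-\sum_{v\in\V\setminus\{w\}}\epsilon^*_{i,v}$, while $u_{v_i}$ together with every $u_w$ for $w\in\hV_i\setminus\V_i$ collapse onto the common image $-\sum_{v\in\V}\epsilon^*_{i,v}$; taking the convex hull yields the stated expression for $\mathcal{D}_i$. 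As the authors themselves flag, the argument is routine bookkeeping rather than substantive — the only mild subtlety, and the only plausible obstacle, is keeping track of which vertices of $(S'_i)^*$ become identified after projection.
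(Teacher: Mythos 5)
Your computation is correct and supplies exactly the routine facet--vertex duality argument that the paper omits as ``straightforward'': the dual basis bookkeeping, the use of $v_i=-\sum_{v\in\hV_i}(\beta_{i,v}/\beta_{i,v_i})e_{i,v}$ with $\sum_v\beta_{i,v}=1$ to get the coefficient $\tfrac{1}{\beta_{i,w}}-1$, and the projection step for $\mathcal{D}_i$, including the collapsing of $u_{v_i}$ and of the $u_w$ with $w\in\hV_i\setminus\V$ onto the single point $-\sum_{v\in\V}\epsilon^*_{i,v}$, are all handled properly. One caveat: what you actually derive (correctly) is $u_{v_i}=-\sum_{v\in\hV_i}e^*_{i,v}$ and $u_w=\left(\tfrac{1}{\beta_{i,w}}-1\right)e^*_{i,w}-\sum_{v\in\hV_i\setminus\{w\}}e^*_{i,v}$, with sums over $\hV_i$, whereas the displayed formula for $(S'_i)^*$ in the statement sums only over $\V$; your version is the right one (it is also what the volume computation in Lemma~\ref{lem:vol_F_dual} requires), so the printed $\V$ should be read as $\hV_i$ rather than as something your argument must match literally.
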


By using the inequality $f_1 f_2\leq\frac{f_1^2+f_2^2}{2}$ we can bound~\eqref{eq:int1} via
\begin{align}
\begin{split}
\vol_{M}(P^*)	&\leq\int_{\blambda\in\mathcal{D}}\frac{\vol_{M'_1}(H_{1,\blambda})^2 +\vol_{M'_2}(H_{2,\blambda})^2}{2}\,d\blambda\\
&=\frac{1}{2}\int_{\blambda\in\mathcal{D}}\vol_{M'_1}(H_{1,\blambda})^2\,d\blambda +\frac{1}{2}\int_{\blambda\in\mathcal{D}}\vol_{M'_2}(H_{2,\blambda})^2\,d\blambda\\
&\leq\frac{1}{2}\int_{\blambda\in\mathcal{D}_1}\vol_{M'_1}(H_{1,\blambda})^2\,d\blambda +\frac{1}{2}\int_{\blambda\in\mathcal{D}_2}\vol_{M'_2}(H_{2,\blambda})^2\,d\blambda,\\
\end{split}
\label{nextiq}
\end{align}
where the final inequality follows from~\eqref{eq:lambda}. It is convenient to perform a change of variables for $i=1,2$, via the maps 
\[
\balpha=(\alpha_v)_{v\in\V}\xmapsto{f_i} (\frac{1}{\beta_{i,v}}\alpha_v - 1)_{v\in\V}.
\]
By Lemma~\ref{lem:S*}, the integration domain $\mathcal{D}_i$ becomes the unimodular $q$-dimensional simplex $\D_{(q)}$; that is, the convex hull of the origin and the standard basis of $\Z^q$. Hence~\eqref{nextiq} can be rewritten as:
\begin{equation}\label{eq:int2}
\vol_{M}(P^*)\leq\frac{1}{2}\prod_{v\in\V}\frac{1}{\beta_{1,v}}\int_{\balpha\in\D_{(q)}}\!\!\vol_{M'_1}(H_{1,f_1(\balpha)})^2\,d\balpha +\frac{1}{2}\prod_{v\in\V}\frac{1}{\beta_{2,v}}\int_{\balpha\in\D_{(q)}}\!\!\vol_{M'_2}(H_{2,f_2(\balpha)})^2\,d\balpha.\\
\end{equation}

\begin{lem}[\!\!{\cite[Lemma~3.5~III]{Ave12}}]
With notation as above, for $i=1,2$,
\[
\vol_{M'_i}(H_{i,f_i(\balpha)})=\vol_{M'_i}(F^*_i)\left( 1 -\sum_{v\in\V_i}\alpha_v\right) ^{d_i-q},
\]
where $F_i$ is the $(d_i-q)$-dimensional face of $(S'_i)^*$ defined in~\eqref{eq:Fi}.
\end{lem}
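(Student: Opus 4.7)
The plan is to verify the lemma by direct computation in the lattice basis $\{e^*_{i,v}\}_{v\in\hV_i}$ of $M'_i$. First, using the identity $v_i = -\sum_{v\in\hV_i}(\beta_{i,v}/\beta_{i,v_i})\,e_{i,v}$, the description $(S'_i)^*=\{y : \langle y,v\rangle\ge -1 \text{ for all } v\in\ve(S_i)\}$ translates, for $y=\sum_{w\in\hV_i} y_w e^*_{i,w}$, into the inequalities $y_v\ge -1$ for $v\in\hV_i$ together with the single extra inequality
\[
\sum_{v\in\hV_i}\beta_{i,v}\,y_v\;\le\;\beta_{i,v_i}.
\]
This matches the vertex description in Lemma~\ref{lem:S*}, and is the starting point for analysing the slice.

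Next I would impose the slicing constraints $y_v=\lambda_v$ for $v\in\V_i$, where $\lambda_v=\alpha_v/\beta_{i,v}-1$ comes from the change of variables $f_i$. Substituting into the extra inequality and using $\beta_{i,v_i}+\sum_{v\in\hV_i}\beta_{i,v}=1$ (the barycentric identity), a short cancellation gives
\[
\sum_{v\in\hV_i\setminus\V_i}\beta_{i,v}\,(y_v+1)\;\le\;1-\sum_{v\in\V_i}\alpha_v.
\]
Setting $z_v:=y_v+1\ge 0$ for the free coordinates $v\in\hV_i\setminus\V_i$, the slice $H_{i,f_i(\balpha)}$ is affinely isomorphic to the $(d_i-q_i)$-dimensional polytope cut out by $z_v\ge 0$ and $\sum\beta_{i,v}z_v\le 1-\sum_{v\in\V_i}\alpha_v$.

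Finally, evaluating at $\balpha=\orig$ (i.e.\ $\blambda=(-1,\ldots,-1)$) recovers exactly $F^*_i$ by definition~\eqref{eq:Fi}. For general $\balpha\in\D_{(q)}$ the resulting polytope is simply $F^*_i$ dilated by the positive scalar $c:=1-\sum_{v\in\V_i}\alpha_v$, and a dilation by $c$ of a $(d_i-q_i)$-dimensional body multiplies its $(d_i-q_i)$-dimensional volume by $c^{d_i-q_i}$. Since we are in the $t=2$ setup with $q=q_i$, the exponent $d_i-q_i$ equals $d_i-q$, which yields the claimed identity. The only subtlety is the bookkeeping with the barycentric coordinates in the reduction step; once the identity $\sum_v\beta_{i,v}=1$ is applied correctly, the geometric scaling picture is immediate and no further work is required.
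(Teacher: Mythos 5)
Your computation is correct, and it is worth noting that the paper does not prove this lemma at all: it is imported as a citation to Averkov's work (\cite[Lemma~3.5~III]{Ave12}), where it is established in the general context of parametrised slices of dual simplices. Your argument is a self-contained replacement: writing $y=\sum_{w\in\hV_i}y_w e^*_{i,w}$, the facet description of $(S'_i)^*$ is indeed $y_v\ge-1$ for $v\in\hV_i$ together with $\sum_{v\in\hV_i}\beta_{i,v}y_v\le\beta_{i,v_i}$ (this is the half-space form of Lemma~\ref{lem:S*}, where in that lemma the index sets should be read as $\hV_i$), and substituting $y_v=\alpha_v/\beta_{i,v}-1$ for $v\in\V_i$ and using $\sum_{v\in\ve(S_i)}\beta_{i,v}=1$ gives exactly the region $\{z\ge0,\ \sum_{v\in\hV_i\setminus\V_i}\beta_{i,v}z_v\le 1-\sum_{v\in\V_i}\alpha_v\}$ in the free coordinates, which is the $c$-dilate of $F^*_i$ with $c=1-\sum_{v\in\V_i}\alpha_v\ge0$ on $\D_{(q)}$; since $\V_i\subset\hV_i$ consists of basis vectors, the induced lattice volume on the slice is Lebesgue measure in the coordinates $(y_v)_{v\in\hV_i\setminus\V_i}$, so the dilation factor $c^{\,d_i-q_i}=c^{\,d_i-q}$ (using $q_i=q$ when $t=2$) is exactly the claimed identity, and your normalisation is consistent with both the integration formula~\eqref{eq:int1} and Lemma~\ref{lem:vol_F_dual}. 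What your route buys is transparency and independence from the cited reference (it even re-derives the content of Lemma~\ref{lem:S*} along the way); what the citation buys the authors is brevity and a statement already available in the generality they need.
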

\noindent
Inequality~\eqref{eq:int2} can now be rewritten as
\begin{align}\label{eq:int3}
\begin{split}
\vol_{M}(P^*)\leq &\frac{1}{2}\prod_{v\in\V}\frac{1}{\beta_{1,v}}\vol_{M'_1}(F^*_1)^t\int_{\balpha\in\D_{(q)}}\left( 1 -\sum_{v\in\V}\alpha_v\right) ^{2(d_1-q)}\,d\balpha\\ +&
\frac{1}{2}\prod_{v\in\V}\frac{1}{\beta_{2,v}}\vol_{M'_2}(F^*_2)^t\int_{\balpha\in\D_{(q)}}\left( 1 -\sum_{v\in\V}\alpha_v\right) ^{2(d_2-q)}\,d\balpha.\\
\end{split}
\end{align}

The following Lemma derives from a special case of a well-known representation of the beta function (see, for example,~\cite[Representation~4.3-2]{Car77}).

\begin{lem}\label{lem:beta-representation}
\[
\int_{\balpha\in\D_{(a)}}( 1 -\alpha_1 -\ldots -\alpha_a )^b\,d\balpha =\frac{b!}{(a+b)!}.
\]
\end{lem}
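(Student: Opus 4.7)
The plan is to proceed by induction on $a$, using Fubini together with a single rescaling substitution---this is essentially the standard derivation of the Dirichlet integral in the special case at hand. For the base case $a = 1$, one directly computes $\int_0^1 (1-\alpha_1)^b\,d\alpha_1 = 1/(b+1) = b!/(1+b)!$, matching the claimed formula.

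For the inductive step, I would peel off the coordinate $\alpha_a$. Writing $\D_{(a)} = \{\balpha \in \R_{\geq 0}^{a} \st \sum_{i=1}^a \alpha_i \leq 1\}$ and integrating $\alpha_a \in [0,1]$ last via Fubini, the inner $(a-1)$-dimensional integral runs over the scaled simplex $(1-\alpha_a)\cdot \D_{(a-1)}$. Applying the linear substitution $\beta_i := \alpha_i/(1-\alpha_a)$ for $i < a$ produces a Jacobian factor $(1-\alpha_a)^{a-1}$ and rewrites $1 - \alpha_1 - \cdots - \alpha_a = (1-\alpha_a)(1 - \beta_1 - \cdots - \beta_{a-1})$. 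The inner integral therefore factors as $(1-\alpha_a)^{a-1+b}$ times the $(a-1)$-dimensional instance of the lemma, and by the inductive hypothesis this equals $(1-\alpha_a)^{a-1+b}\cdot b!/(a-1+b)!$. Closing the induction then amounts to evaluating $\int_0^1 (1-\alpha_a)^{a-1+b}\,d\alpha_a = 1/(a+b)$, which yields the claimed value $b!/(a+b)!$.

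As a cross-check, one can alternatively observe that the lemma is the specialisation $c_1 = \cdots = c_a = 1$, $c_{a+1} = b+1$ of the classical Dirichlet integral formula
\[
\int_{\D_{(a)}}\alpha_1^{c_1 - 1}\cdots\alpha_a^{c_a - 1}\bigl(1 - \alpha_1 - \cdots - \alpha_a\bigr)^{c_{a+1} - 1}\,d\balpha = \frac{\Gamma(c_1)\cdots\Gamma(c_{a+1})}{\Gamma(c_1 + \cdots + c_{a+1})},
\]
which is precisely the content of the representation cited from~\cite{Car77}. There is no substantive obstacle here: the only bookkeeping to watch is the Jacobian of the rescaling substitution and the alignment of factorials across the induction.
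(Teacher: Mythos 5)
Your induction is correct: the base case, the Fubini slicing, the Jacobian $(1-\alpha_a)^{a-1}$ of the rescaling $\beta_i=\alpha_i/(1-\alpha_a)$, and the final evaluation $\int_0^1(1-\alpha_a)^{a-1+b}\,d\alpha_a=1/(a+b)$ all fit together to give $b!/\bigl((a-1+b)!\,(a+b)\bigr)=b!/(a+b)!$. The paper, however, does not prove the lemma at all: it simply observes that the statement is a special case of a well-known representation of the beta function and cites Carlson \cite{Car77} (Representation~4.3-2), which is exactly the Dirichlet-integral specialisation $c_1=\cdots=c_a=1$, $c_{a+1}=b+1$ that you mention as a cross-check. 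So your main argument is a genuinely different, self-contained route: it buys an elementary verification needing nothing beyond Fubini and a linear change of variables (and works verbatim for any real $b\ge 0$ if one replaces the factorials by Gamma values), whereas the paper's citation buys brevity and places the lemma in the standard special-functions context. Either way the result, and its use in bounding the integrals in \S\ref{sec:integration}, is unaffected.
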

\noindent
Applying Lemma~\ref{lem:beta-representation} to~\eqref{eq:int3} yields:
\begin{equation}\label{eq:int4}
\vol_{M}(P^*)\leq\frac{1}{2}\prod_{v\in\V}\frac{1}{\beta_{1,v}}\vol_{M'_1}(F^*_1)^2\frac{(2(d_1-q))!}{(q+2(d_1-q))!} +\frac{1}{2}\prod_{v\in\V}\frac{1}{\beta_{2,v}}\vol_{M'_2}(F^*_2)^2\frac{(2(d_2-q))!}{(q+(2(d_2-q))!}.\\
\end{equation}

The volume of $F^*_i$ is computed in Lemma~\ref{lem:vol_F_dual} below. Its proof is omitted, since it is a straightforward consequence of the description of $(S'_i)^*$ given in Lemma~\ref{lem:S*}.

\begin{lem}\label{lem:vol_F_dual}
With notation as above, for $i=1,2$,
\[
\vol_{M'_i}(F_i^*)=\frac{1}{(d_i-q)!}\prod_{v\in\hV_i\setminus\V}\frac{1}{\beta_{i,v}}.
\]
\end{lem}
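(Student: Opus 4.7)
The plan is to describe $F_i^*$ explicitly as a simplex in the affine flat $\{y\in(M'_i)_\R \st \langle y,v\rangle = -1\text{ for all }v\in\V\}$ and then compute its relative volume as a determinant. Since Lemma~\ref{lem:S*} provides every vertex of $(S'_i)^*$ in terms of the dual basis $\{e^*_{i,v}\}_{v\in\hV_i}$ of $M'_i$, the only real work is to identify the subset of these vertices lying on the face and to evaluate the resulting determinant in the correct sublattice.

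First I would test each vertex of $(S'_i)^*$ against the defining equations $\langle y,v\rangle = -1$ for $v\in\V$. The vertex $u_0:=-\sum_{v\in\hV_i}e^*_{i,v}$ satisfies every one of them, while among the remaining vertices $u_w$ (indexed by $w\in\hV_i$) the face equations fail precisely when $w\in\V$, since in that case the $w$-th coordinate of $u_w$ equals $1/\beta_{i,w}-1\neq -1$. Hence the surviving vertices are $u_0$ together with $u_w$ for $w\in\hV_i\setminus\V$, giving $1+(d_i-q)$ points and confirming that $F_i^*$ is a $(d_i-q)$-simplex. A direct computation then yields the edge vectors
\[
u_w-u_0 \,=\,\frac{1}{\beta_{i,w}}\,e^*_{i,w}\qquad\text{for each }w\in\hV_i\setminus\V.
\]

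Next, the relevant lattice on $\aff(F_i^*)$ is $\{y\in M'_i\st\langle y,v\rangle = 0 \text{ for all } v\in\V\}$, which in the dual basis is the direct summand $\bigoplus_{w\in\hV_i\setminus\V}\Z\, e^*_{i,w}$ of $M'_i$; it is saturated because the projection $M'_i\twoheadrightarrow\bigoplus_{v\in\V}\Z$ onto the coordinates indexed by $\V$ splits. Relative to this lattice basis, the edge matrix of $F_i^*$ at $u_0$ is diagonal with entries $1/\beta_{i,w}$, so
\[
\vol_{M'_i}(F_i^*) = \frac{1}{(d_i-q)!}\prod_{w\in\hV_i\setminus\V}\frac{1}{\beta_{i,w}},
\]
as claimed. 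There is no serious obstacle — the lemma reduces to straightforward bookkeeping once Lemma~\ref{lem:S*} is invoked — but one should take care to use the saturated sublattice of $M'_i$ on the affine hull of $F_i^*$, since using a non-saturated sublattice would introduce an extraneous index factor into the volume.
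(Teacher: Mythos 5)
Your argument is correct and is precisely the ``straightforward consequence of Lemma~\ref{lem:S*}'' that the paper invokes when omitting the proof: you read off the vertices of $F_i^*$ from the dual simplex, obtain the edge vectors $\frac{1}{\beta_{i,w}}e^*_{i,w}$ for $w\in\hV_i\setminus\V$, and compute the determinant with respect to the saturated sublattice $\bigoplus_{w\in\hV_i\setminus\V}\Z\,e^*_{i,w}$, which is exactly the intended computation. Note only that you (correctly) take the first vertex of $(S'_i)^*$ to be $-\sum_{v\in\hV_i}e^*_{i,v}$, silently fixing what appears to be a typo in the statement of Lemma~\ref{lem:S*}, where the sums are written over $\V$ instead of $\hV_i$.
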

\noindent
Finally, applying Lemma~\ref{lem:vol_F_dual} to~\eqref{eq:int4} gives the following bound for $\vol_{M}(P^*)$:
\begin{align}\label{eq:int5}
\begin{split}
\vol_{M}(P^*)\leq\frac{1}{2}&\frac{(2(d_1-q))!}{(q+2(d_1-q))!((d_1-q)!)^2}\prod_{v\in\V}\frac{1}{\beta_{1,v}}\prod_{v\in\hV_1\setminus\V }\frac{1}{\beta_{1,v}^2}\\
&+\frac{1}{2}\frac{(2(d_2-q))!}{(q+2(d_2-q))!((d_2-q)!)^2}\prod_{v\in\V}\frac{1}{\beta_{2,v}}\prod_{v\in\hV_2\setminus\V }\frac{1}{\beta_{2,v}^2}
\end{split}
\end{align}

\section{Final cases}\label{sec:final}
In this final section we address the remaining cases of Corollary~\ref{cor:almostallcases}. That is, we prove that the decompositions
\begin{enumerate}
\item $t=2$, $d_1=d_2=d-1$, for $d\geq 4$
\item $t=2$, $d_1=d-1$, $d_2=d-2$, $d\in\{4,5\}$
\item $t=3$, $d_1=d_2=d_3=d-2$, $d\in\{4,5\}$
\end{enumerate}
satisfy Theorem~\ref{thm:main}.
\subsection{The case $t=2$, $d_1=d_2=d-1$}\label{sec:t2_d}
By~\eqref{eq:dr} we have $q=d-2$. Hence, inequality~\eqref{eq:int5} can be rewritten as
\begin{equation}\label{eq:lastcase}
\vol_{M}(P^*)\leq\frac{1}{d!}\left(\prod_{v\in\V}\frac{1}{\beta_{1,v}}\prod_{v\in\hV_1\setminus\V }\frac{1}{\beta_{1,v}^2} +\prod_{v\in\V}\frac{1}{\beta_{2,v}}\prod_{v\in\hV_2\setminus\V }\frac{1}{\beta_{2,v}^2}\right)
\end{equation}
We focus on the product
\[
\prod_{v\in\V}\frac{1}{\beta_{i,v}}\prod_{v\in\hV_i\setminus\V }\frac{1}{\beta_{i,v}^2}
\]
for each $i=1,2$. Note that in~\S\ref{subsec:slicing2} we made a choice to exclude one of the vertices (called $v_i$) of $\ve(S_i)\setminus\V$ from appearing in $\hV_i$. As there are two such vertices (say, $\ve(S_i)\setminus\V=\{v_i, u_i\}$), 
we can exclude the one whose corresponding barycentric coordinate is smaller; that is, $\beta_{i,{v_i}}\le\beta_{i,{u_i}}$. This yields
\begin{equation}\label{eq:last}
\prod_{v\in\V}\frac{1}{\beta_{i,v}}\prod_{v\in\hV_i\setminus\V }\frac{1}{\beta_{i,v}^2} =\left(\prod_{v\in\V}\frac{1}{\beta_{i,v}}\right)\frac{1}{\beta_{i,{u_i}}^2}\le\left(\prod_{v\in\V}\frac{1}{\beta_{i,v}}\right)\frac{1}{\beta_{i,{u_i}}}\frac{1}{\beta_{i,{v_i}}} =\frac{1}{\beta_{i,0},\ldots,\beta_{i,d-1}},
\end{equation}
where $\{\beta_{i,v}\st v\in\ve(S_i)\} =\{\beta_{i,j}\st j=0,\ldots, d-1\}$. Notice that equality in~\eqref{eq:last} is attained if and only if $\beta_{i,u_i} =\beta_{i,v_i}$.

For each $i=1,2$, let us sort the barycentric coordinates such that $\beta_{i,0}\ge\beta_{i,1}\ge\cdots\ge\beta_{i,d-1}$.

\begin{lem}[\!\!{\cite[Lemma~4.2(d)]{AKN14}}]\label{lem:bary_iff}
With notation as above,
\[
\frac{1}{\beta_{i,0}\cdots\beta_{i,d-1}}\leq (s_d-1)^2
\]
with equality if and only if
\begin{equation}\label{eq:bary_iff}
\left(\beta_{i,0},\ldots,\beta_{i,d-1}\right) =\left(\frac{1}{s_1},\ldots,\frac{1}{s_{d-1}},\frac{1}{s_d-1}\right).
\end{equation}
\end{lem}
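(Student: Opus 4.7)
The statement is cited directly as \cite[Lemma~4.2(d)]{AKN14}, so the most economical course is simply to appeal to that reference. Nevertheless, I will sketch the argument one could give to make the paper self-contained. The plan is to translate the inequality into a classical extremal problem for Egyptian fractions and then run a greedy Sylvester-style induction.

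First, I would pass from the barycentric coordinates to weights. Since $S_i$ is a canonical Fano lattice simplex with vertices $v_0,\ldots,v_{d-1}$ and the origin in its interior, the relation $\sum_j \beta_{i,j} v_j = \orig$ with $\sum_j \beta_{i,j}=1$ can be cleared of denominators: write $\beta_{i,j}=\lambda_j/h$ with $\lambda_0,\ldots,\lambda_{d-1}$ positive integers, $h:=\sum_j\lambda_j$, and $\gcd(\lambda_0,\ldots,\lambda_{d-1})=1$. The canonical-singularity hypothesis then forces the weights to satisfy the crucial inequality
\[
\sum_{j=0}^{d-1}\left\lbrace\frac{k\lambda_j}{h}\right\rbrace\ge 1\qquad\text{for every }k\in\{1,\ldots,h-1\},
\]
which rules out ``too small'' weights. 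Assuming $\beta_{i,0}\ge\beta_{i,1}\ge\cdots\ge\beta_{i,d-1}$, equivalently $\lambda_0\ge\lambda_1\ge\cdots\ge\lambda_{d-1}$, the goal becomes to show that the quantity $\prod_j(1/\beta_{i,j})=h^d/\prod_j\lambda_j$ is bounded above by $(s_d-1)^2$, with equality exactly at the stated sequence.

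The inductive step is a standard greedy Sylvester argument. Since the $\beta_{i,j}$ are decreasing and sum to $1$, the largest one satisfies $\beta_{i,0}\ge 1/d$, and more to the point one shows $1/\beta_{i,0}$ is a \emph{positive integer} at most $s_1=2$, or else all subsequent bounds are automatic. For the inductive step, I would normalise: if the first $j$ coordinates already coincide with $(1/s_1,\ldots,1/s_j)$, then the remaining slack is $1-\sum_{k=1}^{j}1/s_k=1/(s_{j+1}-1)$, and one runs the same argument on the rescaled $(d-j-1)$-dimensional problem. The two possibilities at each step are: either the next coordinate equals $1/s_{j+1}$ (continue the induction), or it is strictly smaller, in which case a short estimate shows the product $\prod_k(1/\beta_{i,k})$ already falls strictly below $(s_d-1)^2$. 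The equality case then collapses to the unique assignment $(1/s_1,\ldots,1/s_{d-1},1/(s_d-1))$, where the last coordinate simply absorbs the remaining slack $1/(s_d-1)$.

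The subtle point, and the one I would expect to be the main obstacle, is verifying that the recursive step preserves the hypotheses: after absorbing $\beta_{i,0}=1/s_1$ and rescaling, the residual problem need not correspond to a genuine lower-dimensional canonical Fano simplex, so the canonical hypothesis has to be replaced by the fractional-part inequality above (or by an equivalent arithmetic condition on the weights). Carrying this out in detail is exactly the content of \cite[Lemma~4.2]{AKN14}, and our lemma is then its item (d) applied to each $S_i$ in turn.
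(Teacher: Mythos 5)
The paper gives no argument for this lemma beyond the citation to \cite[Lemma~4.2(d)]{AKN14}, which is precisely your primary move, so your proposal takes essentially the same approach as the paper. Your supplementary sketch of the Sylvester-type greedy induction is only heuristic and not entirely accurate in its details (for instance, $1/\beta_{i,0}$ need not be an integer, as the minimal canonical weights $(3,4,5,7)$ from Table~\ref{tab:dim3} show), but since the burden of proof rests on the citation this does not affect the correctness of the proposal.
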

\noindent
Applying Lemma~\ref{lem:bary_iff} and~\eqref{eq:last} to~\eqref{eq:lastcase} we obtain
\[
\vol_{M}(P^*) <\frac{2(s_d-1)^2}{d!}.
\]
This inequality is strict, since the condition that $\beta_{i,u_i} =\beta_{i,v_i}$ from~\eqref{eq:last} and the condition~\eqref{eq:bary_iff} from Lemma~\ref{lem:bary_iff} cannot hold simultaneously.

\subsection{The cases $t=2$, $d_1=d-1$, $d_2=d-2$, $d\in\{4,5\}$}\label{sec:t2_45}
The barycentric coordinates of the canonical Fano simplices up to and including dimension four are classified in~\cite{Kas13}. Hence we can verify that, in this situation, the right hand side of~\eqref{eq:int5} is always strictly less than $2(s_d-1)^2/d!$.

\subsection{The cases $t=3$, $d_1=d_2=d_3=d-2$, $d\in\{4,5\}$}\label{sec:t3_45}
To prove the inequality in these final cases we explicitly construct every minimal polytope $P$ of dimension four or five that admits a decomposition into three minimal simplices of dimensions two or three, respectively. Moreover, we insist that the vertices of $P$ generate the ambient lattice $N$. Indeed, if $P$ is a minimal polytope then $P$ restricted to the lattice generated by the vertices of $P$ is also minimal, and the volume of the dual polytope will have increased. Under this setting we note that $P$ is uniquely determined by:
\begin{enumerate}
\item
the barycentric coordinates of the simplices $S_1,S_2,S_3$ in the decomposition; and
\item
the choice of $d-3$ vertices in common with $S_2$ and $S_1$, together with the choice of $d-3$ vertices in common with $S_3$ and $S_1\cup S_2$.
\end{enumerate}
This follows from the following general construction. The \emph{(reduced) weights} of a canonical Fano simplex $S$ of dimension $n$ are the positive integers $(k\beta_0,\ldots, k\beta_n)$ given by the  barycentric coordinates $(\beta_0,\ldots,\beta_n)$ of the origin (with respect to the vertices of $S$), where $k$ is the smallest positive integer such that the $k\beta_i$ are all integral. In particular, the weights of a canonical Fano simplex are coprime. Moreover, since the vertices of a canonical Fano simplex are primitive lattice points, the weights are also \emph{well-formed}; that is, any $n$ of them are also coprime.

For the construction we use the fact that any minimal polytope $P$ has a decomposition into $t$ minimal simplices. We proceed invariantly, since we do not know the embedding of these simplices into the lattice $N$. Let $\underline{\lambda}^{(n)}=(\lambda_0,\ldots,\lambda_n)$ denote the (reduced, well-formed) weights of a minimal canonical Fano simplex of dimension $n$. Fix weights $\underline{\lambda}^{(d_1)},\ldots,\underline{\lambda}^{(d_t)}$. For each pair $(i,j)$ with $1\leq i < j\leq t$ we pick a (possibly empty) subset $V_{ij}\subset\{0,\ldots,d_i\}\times\{0,\ldots,d_j\}$ such that $V_{ij}:\pi_1(V_{ij})\to\pi_2(V_{ij})$ is a bijection (here $\pi_k$ denotes the projection on the $k$-th factor). Let $\iota_j:\Z^{d_j+1}\to\bigoplus_{i=1}^t\Z^{d_i+1} $, $1\leq j\leq t$, be the natural inclusion on the $j$-th factor. Define:
\begin{align*}
W&:=\left\langle\iota_i(\underline{\lambda}^{(d_i)})\mid 1\leq i\leq d \right\rangle,\\
V&:=\left\langle\iota_i(e_{\pi_1(v)}) -\iota_j(e_{\pi_2(v)})\mid v\in V_{ij}, 1\leq i < j\leq t\right\rangle.
\end{align*}
Applying $-\otimes\R$ ensures torsion-freeness of the quotient $\left(\bigoplus_{i=1}^t\Z^{d_i+1}\right)/(W + V)$, therefore we get the exact sequence
\[
0\to (W + V)\otimes\R\to\left(\bigoplus_{i=1}^t\Z^{d_i+1}\right)\otimes\R\xrightarrow{\varphi_\R} N\otimes\R\to 0,
\]
where $N$ is the lattice obtained as the quotient $\left(\bigoplus_{i=1}^t\Z^{d_i+1}\right)/K$, where $K$ is the direct summand defined by $\left(\bigoplus_{i=1}^t\Z^{d_i+1}\right)\cap\left((W + V)\otimes\R\right)$. We now define
\[
Q:=\varphi_\R\left( \bigoplus_{i=1}^t\iota_i\left(\conv\{e_0,\ldots,e_{d_i}\}\right)\right)\subset N\otimes\R
\]
which by construction is a polytope whose vertices generate its ambient lattice $N$. $Q$ in general may not be a minimal polytope, however, if $P$ is a minimal lattice polytope of dimension $d$ whose vertices generate its ambient lattice then there exists a choice of integers $t,d_1,\ldots,d_t$, weights $\underline{\lambda}^{(d_1)},\ldots,\underline{\lambda}^{(d_t)}$ of minimal Fano simplices $S_1,\ldots,S_t$ of dimensions $d_1,\ldots,d_t$, and subsets $V_{ij}$ (for $1\leq i < j\leq t$) such that the polytope $Q$ constructed above is equal to $P$. The fact that we can recover $P$ from the construction of $Q$ is a consequence of Lemma~\ref{lem:generators}, while existence of the parameters $t,d_1,\ldots,d_t$ and the weights follows from Corollary~\ref{cor:Kasp}. 

We now specialise this construction to the case $t=3$, $d_1=d_2=d_3=d-2$, for $d\in\{4,5\}$. The weights of the minimal canonical Fano simplices of dimension two and three have been classified in~\cite[Figure~1 and Proposition~4.3]{Kas10}. There are two possible weights in dimension two: $(1,1,1)$ and $(1,1,2)$. In dimension three there are $13$ possible weights\footnote{\cite[Proposition~4.3]{Kas10} incorrectly lists $(2,2,3,5)$ as the weight of a minimal canonical Fano simplex, however any such simplex will contain a canonical Fano sub-simplex with weights $(1,1,1,3)$.}, recorded in Table~\ref{tab:dim3}. Since the choices for the common vertices (encoded in the sets $V_{ij}$, $1\leq i < j\leq 3$) are finite, so all the minimal canonical Fano polytopes $P$ admitting such a decomposition and whose vertices generate the ambient lattice $N$ can be classified.

\begin{table}[tb]
\centering
\begin{tabular}{ccccc}
$( 1, 1, 1, 1 )$&$( 1, 1, 1, 2 )$&$( 1, 1, 1, 3 )$&$( 1, 1, 2, 2 )$&$( 1, 1, 2, 3 )$\\
$( 1, 1, 2, 4 )$&$( 1, 1, 3, 4 )$&$( 1, 1, 3, 5 )$&$( 1, 1, 4, 6 )$&$( 1, 2, 3, 5 )$\\
$( 1, 3, 4, 5 )$&$( 2, 3, 5, 7 )$&$( 3, 4, 5, 7 )$&&
\end{tabular}\vspace{0.5em}
\caption{The weights of the minimal canonical Fano simplices in dimension three.}
\label{tab:dim3}
\end{table}

We use the computer algebra system \textsc{Magma}~\cite{BCP97} to derive the classification. Source code and output can be downloaded from the Graded Ring Database~\cite{GRDb}. In the first case ($d_1=d_2=d_3=2$), there are exactly four such four-dimensional polytopes, and in each case the inequality of Theorem~\ref{thm:main} holds. In order to solve the second case ($d_1=d_2=d_3=3$), we first build all possible four-dimensional minimal polytopes $P'$ whose vertices generate the ambient lattice, and admitting a decomposition into two three-dimensional minimal canonical Fano simplices $S_1$ and $S_2$. We then verify that any five-dimensional polytope $P$ decomposing as $S_1$, $S_2$, and $S_3$ satisfies inequality~\eqref{eq:geq3cor} for each choice of three-dimensional minimal canonical Fano simplex $S_3$; that is, we verify that
\[
\Vol(P^*)\leq\frac{7!}{4!\,3!}\Vol(P'^*)\cdot 2(s_3-1)^2 < 2(s_5-1)^2
\]
holds in each case. There are $147$ minimal four-dimensional polytopes with a decomposition into two three-dimensional minimal canonical Fano simplices and whose vertices generate the lattice $N$, and in each case the inequality holds. This completes the proof of Theorem~\ref{thm:main}.

\subsection*{Acknowledgements}
GB is supported by the Stiftelsen GS Magnusons Fund and by a Jubileumsfond grant from the Knut and Alice Wallenbergs Foundation. In addition, both GB and BN are partially supported by Vetenskapsr{\aa}det grant~NT:2014-3991. BN is an affiliated researcher of Stockholm University; he would like to thank the Fields Institute for the financial support to participate in the thematic program ``Combinatorial Algebraic Geometry''. AK is supported by EPSRC grant~EP/N022513/1.
\bibliographystyle{amsplain}
\providecommand{\bysame}{\leavevmode\hbox to3em{\hrulefill}\thinspace}
\providecommand{\MR}{\relax\ifhmode\unskip\space\fi MR }
\providecommand{\MRhref}[2]{%
  \href{http://www.ams.org/mathscinet-getitem?mr=#1}{#2}
}
\providecommand{\href}[2]{#2}

\end{document}